\def\ser{\leavevmode\raise.585ex\hbox{\small er}}
\theoremstyle{nonumberplain}
\newtheorem{definition}{Definition}
\theoremstyle{plain}
\theoremstyle{break}
\theoremstyle{plain} \theoremheaderfont{\normalfont\bfseries}
\newtheorem{theo}{Theorem}[section]
\theoremstyle{break}
\theoremstyle{plain} \theoremheaderfont{\normalfont\bfseries}
\newtheorem{cor}[theo]{Corollary}
\theoremstyle{plain} \theoremheaderfont{\normalfont\bfseries}
\newtheorem{lemma}[theo]{Lemma}
\theoremstyle{plain} \theoremheaderfont{\normalfont\bfseries}
\newtheorem{prop}[theo]{Proposition}
\newenvironment{proof}[1][Proof]{\textbf{#1.} }{\ \rule{0.5em}{0.5em}}
\DeclareMathOperator{\im}{im}
\DeclareMathOperator{\FIG}{FI[G]}
\DeclareMathOperator{\FIsG}{FI\#[G]}
\DeclareMathOperator{\C}{Conf}
\DeclareMathOperator{\Mod}{Mod}
\DeclareMathOperator{\PMod}{PMod}
\DeclareMathOperator{\PSigma}{P\Sigma}
\DeclareMathOperator{\Diff}{Diff}
\DeclareMathOperator{\PDiff}{PDiff}
\DeclareMathOperator{\Hom}{Hom}
\DeclareMathOperator{\Ind}{Ind}
\newcommand{\BPMod}{B\PMod}
\newcommand{\BDiff}{B\Diff}
\newcommand{\BPDiff}{B\PDiff}
\title{On the cohomology of pure mapping class groups as FI-modules } 
\author{ R. Jim\'enez Rolland} 
\date{}
\begin{document}

\maketitle
\begin{abstract} 

In this paper we apply the theory of finitely generated FI-modules developed by Church, Ellenberg and Farb to certain sequences of rational cohomology groups. Our main examples are the cohomology of the moduli space of $n$-pointed curves, the cohomology of the pure mapping class group of surfaces and some manifolds of higher dimension, and the cohomology of classifying spaces of some diffeomorphism groups. We introduce the notion of FI$[G]$-module and use it to strengthen and give new context to results on representation stability discussed by the author in a previous paper. Moreover, we prove that the Betti numbers of these spaces and groups are polynomial and find bounds on their degree. Finally, we obtain rational homological stability of certain wreath products.

%The notion of an FI-module was introduced by Church, Ellenberg and Farb in order to encode the information of a sequence of $S_n$-representations in a single object and convert the condition of representation stability defined by Church and Farb into a finite generation property.
%Our goal here is to revisit certain sequences of rational cohomology groups that can be given the structure of a finitely generated FI-module and study their finiteness properties and their consequences. In particular, we recover results from \cite{JIM}with improved bounds.
\end{abstract}

\section{Introduction}
Our objective is to study some examples of sequences of cohomology groups that have an underlying structure of an FI-module as defined by Church--Ellenberg--Farb in \cite{3AMIGOS} and derive as many consequences as we can from this approach. 

We denote by {\bf FI} is the category whose objects
are finite sets and whose morphisms are injections.
First we will consider a functor $X$ from {\bf FI$^{op}$} to the category {\bf Top} of topological spaces or to the category {\bf Gp} of groups.  In the first case we call $X$ a {\it co-FI-space}, in the second case $X$ is a {\it co-FI-group}. Given such a functor $X$, for any  $i\geq 0$, we will be interested in the {\it FI-module} $H^i(X;R)$ over a Noetherian ring $R$. This is a functor from {\bf FI} to {\bf Mod$_{R}$}, the category of $R$-modules, that we obtain by composing $X$ with the cohomology functor $H^i(\_\_;R)$. We can consider the graded version:  $H^*(X;R)$ is called {\it graded FI-module over $R$}. In Section \ref{FI} below we recall the general definition of FI-module and some of its properties. 

In this paper we will focus in the FI-modules that arise from the  following examples of co-FI-spaces and co-FI-groups. 

\begin{itemize}
\item[{\bf (1)}]
The co-FI-space $\C_{\bullet}(M)$. For each $n\geq 1$ we consider the {\it configuration space $\C_n(M)$ of ordered $n$-tuples of distinct points on a topological space $M$}, which is 
the space of embeddings $\text{Emb}([n],M)$ of $[n]=\{1,\ldots,n\}$ into $M$. %\{(x_1,\ldots,x_n)\in M^n: x_i\neq x_j \text{ if } i\neq j\}.$$ 
The co-FI-space $\C_{\bullet}(M)$ is given by {\bf n} $\mapsto \C_n(M)$ and for a given inclusion $f:[m]\hookrightarrow [n]$ in $\Hom_{\text{FI}}(\mathbf{m},\mathbf {n})$ the corresponding restriction $f^*:\C_n(M)\rightarrow \C_m(M)$ is given by precomposition.

\item[{\bf (2)}] The co-FI-space $\mathcal{M}_{g,\bullet}$. Let $g\geq 0$ and $n\geq 0$. We denote by $\mathcal{M}_{g,n}$ the {\it moduli space of genus $g$ Riemann surfaces with $n$ marked points}. The elements in $\mathcal{M}_{g,n}$ are equivalent classes, up to biholomorphism,  of pairs $(X, \mathfrak{p})$, where $X$ is a Riemann surface of genus $g$ and $\mathfrak{p}\in\C_n(X)$. The  functor $\mathcal{M}_{g,\bullet}$ is given by  $\mathbf{n}\mapsto\mathcal{M}_{g,n}$ and such that assigns to  $f\in\Hom_{\text{FI}}(\mathbf{m},\mathbf {n})$ the morphism $f^*: \mathcal{M}_{g,n}\rightarrow \mathcal{M}_{g,m}$  defined by $f^*\big([(X; \mathfrak{p})]\big)=[(X;\mathfrak{p}\circ f)]$.

%\item[{\bf (4)}] The co-FI-space $\overline{\mathcal{M}}_{g,\bullet}$. This is the functor $\mathbf{n}\mapsto\overline{\mathcal{M}}_{g,n}$, the Deligne-Mumford compactification of the moduli space $\mathcal{M}_{g,n}$. 
%The natural inclusions $\mathcal{M}_{g,n}\hookrightarrow\overline{\mathcal{M}}_{g,n}$ 
 %give us a map of co-FI-spaces $\mathcal{M}_{g,\bullet}\rightarrow \overline{\mathcal{M}}_{g,\bullet}$.  

\item[{\bf (3)}] The co-FI-group $\PMod^{\bullet}(M)$. Given $M$ a connected, smooth manifold, consider $\mathfrak{p}\in\C_n(\mathring{M})$, in other words, $\big(\mathfrak{p}(1),\mathfrak{p}(2),\ldots,\mathfrak{p}(n)\big)$ is  a configuration of $n$ distinct points in the interior of $M$. We denote by 
$\Diff^{\mathfrak{p}} (M)$   the subgroup in $\Diff (M \text{ rel } \partial M)$ of diffeomorphisms that leave invariant the set $\mathfrak{p}([n])$ and $\PDiff^{\mathfrak{p}} (M)$ is the subgroup that consists of the diffeomorphims that fix each  point in $\mathfrak{p}([n])$.

The \textit{mapping class group} is the group $\Mod^n(M):=\pi_0\big(\Diff^\mathfrak{p} (M)\big). $ Similarly the \textit{pure mapping class group} is $\PMod^n(M):= \pi_0(\PDiff^\mathfrak{p} (M)).$ 
Notice that if $\mathfrak{p},\mathfrak{q}\in\C_n(M)$, since $M$ is connected,  then $\Diff^{\mathfrak{p}} (M)\approx\Diff^{\mathfrak{q}} (M)$ and $\PDiff^{\mathfrak{p}} (M)\approx\PDiff^{\mathfrak{q}} (M)$. We refer to them by $\Diff^n(M)$ and $\PDiff^n(M)$, respectively.

The co-FI-group $\PMod^{\bullet}(M)$ is given by $\mathbf{n}\mapsto \PMod^{n}(M)$. Any inclusion
$f:[m]\hookrightarrow [n]$ induces a restriction map $\PDiff^{\mathfrak{p}}(M) \rightarrow \PDiff^{\mathfrak{p}\circ f}(M)$, which gives us the morphism $f^*:\PMod^n(M)\rightarrow\PMod^m(M)$.

When $M=\Sigma_{g,r}$ is a compact orientable surface of genus $g\geq 0$ with $r\geq 0$ boundary components, we restrict to orientation-preserving self-diffeomorphisms. We denote $\PMod^{\bullet}(\Sigma_{g,r})$ by $\PMod_{g,r}^{\bullet}$.

\item[{\bf (4)}] The co-FI-space $\BPDiff^\bullet(M)$.
This is the functor $\mathbf{n}\mapsto \BPDiff^n(M)$, where $\BPDiff^n(M)$ is the classifying space of the group $\PDiff^n$ defined before. The morphisms are defined in a similar manner as for $\PMod^{\bullet}(M)$. If $M$ is orientable, we can restrict to orientation-preserving diffeomorphisms.  

\end{itemize}

\noindent{\bf Remark: }Observe that for each of the previous examples, when $R=\mathbb{Q}$, the FI-module $H^i(X):=H^i(X;\mathbb{Q})$ encodes the information of the sequence $\big\{H^i(X_n;\mathbb{Q})\big\}_{n\in\mathbb{N}}$ of finite dimensional representations of the symmetric groups $S_n$. %We recall the relation of FI-modules with sequences of representations in Section \ref{REPSTAB}.

\subsection{Main Results}
Let $X$ be any of the co-FI-spaces or co-FI-groups from before.  A combination of the work by Church (\cite{CHURCH}), Church--Ellenberg--Farb (\cite{3AMIGOS}) and myself (\cite{JIM}) implies that, under some hypotheses recalled below, for any $i\geq0$ the FI-module $H^i(X)$ is {\it finitely generated} as defined in \cite{3AMIGOS} (see Section \ref{FI} for a precise definition).
In this paper we revisit these examples and give direct proofs of finite generation of $H^i(X)$ in Theorems \ref{DIM2},  \ref{MCG}, \ref{PUREDIM>3}, \ref{BDRY} and \ref{DIFFDIM>3} below. Then, the theory developed by Church--Ellenberg--Farb  in \cite{3AMIGOS} and further results by Church--Ellenberg--Farb--Nagpal in \cite{4AMIGOS} allow us to conclude the following:\medskip
\begin{theo}\label{EXA2} For $i\geq 0$, 
and each of the sequences $\{X_n\}$ in Table 1, there is an integer $N\geq 0$ such that for $n\geq N$ the following holds:
\begin{itemize}
\item[{\bf i)}] The decomposition of $H^i(X_n;\mathbb{Q})$ into irreducible $S_n$-representations stabilizes in the sense of uniform representation stability (as defined in \cite{CHURCH_FARB}) with stable range $n\geq N$.

\item[{\bf ii)}] %The dimension $\dim_{\mathbb{Q}}\big(H^i(X_n/S_n;W_n)\big)$ is constant, where $\{W_n\}$ is a sequence of finite dimensional $S_n$-representations. In particular, 
The sequence of quotients $\{X_n/S_n\}$ satisfies rational homological stability for $n\geq N$. 

\item[{\bf iii)}] The character $\chi_n$  of $H^i(X_n;\mathbb{Q})$ is of the form:%given by a polynomial in the cycle-counting functions(known as {\it character polynomial})
$$\chi_{n}=Q_i(Z_1,Z_2,\ldots,Z_r),$$
where deg$(\chi_n)=r>0$ only depends on $i$ and $Q_i\in\mathbb{Q}[Z_1,Z_2,\ldots]$  is a unique polynomial in the class functions $$Z_l(\sigma):=\# \text{ cycles of length  l in }\sigma, \text{\hspace{5mm} for  any }\sigma\in S_n.$$%  that is independent of $n$. 

\item[{\bf iv)}] The length of the representation $\ell\big( H^i(X_n;\mathbb{Q})\big)$ is bounded above independently of $n$.\medskip 
\end{itemize}
\noindent The specific bounds for each example are presented in Table 1.
Moreover, if $k$ is any field, for each $X_n$ in Table 1 (except for $\mathcal{M}_{g,n}$) there exists an integer-valued polynomial $P(T) \in\mathbb{Q}[T]$ so that for all
sufficiently large $n$,
$$\dim_k\big(H^i(X_n;k)\big) = P(n).$$
For $X_n=\mathcal{M}_{g,n}$ this is true when $k=\mathbb{Q}$.
\end{theo}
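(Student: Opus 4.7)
The plan is to reduce every part of the statement to a single technical input: that each $H^i(X)$ is a finitely generated FI-module over the relevant ring. Once this is granted from Theorems \ref{DIM2}, \ref{MCG}, \ref{PUREDIM>3}, \ref{BDRY} and \ref{DIFFDIM>3}, conclusions (i)--(iv) together with the polynomiality of Betti numbers follow from general machinery developed in \cite{3AMIGOS} and \cite{4AMIGOS}, and the content of the theorem reduces to extracting explicit numerical bounds (weight and stability degree) from the proofs of finite generation to populate Table 1.

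For parts (i) and (iv) I would invoke the Church--Ellenberg--Farb theorem asserting that a finitely generated FI-module over $\mathbb{Q}$ gives rise to a uniformly representation stable sequence of $S_n$-representations, with stable range controlled by its weight and stability degree; in particular the length $\ell(H^i(X_n;\mathbb{Q}))$ is bounded above by the weight, independently of $n$. Both (i) and (iv) therefore follow once we read off these invariants from the finite generation proofs. Part (iii) is likewise an immediate consequence of the character-polynomial theorem of \cite{3AMIGOS}: for any finitely generated FI-module $V$ over $\mathbb{Q}$, there is a unique polynomial $Q\in\mathbb{Q}[Z_1,Z_2,\ldots]$ in the cycle-counting class functions such that $\chi_{V_n} = Q(Z_1,\ldots,Z_r)$ for all sufficiently large $n$, with $\deg Q$ equal to the weight of $V$.

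Part (ii) then follows by combining (iii) with a transfer argument: since we work rationally, $H^i(X_n/S_n;\mathbb{Q}) \cong H^i(X_n;\mathbb{Q})^{S_n}$, and the dimension of the trivial isotypic component of an FI-module with eventually polynomial character is the stable value of $\langle \chi_{V_n}, 1\rangle_{S_n}$, which can be read off from the character polynomial $Q_i$ and stabilizes once $n$ exceeds the bound from (i). Hence $H^i(X_n/S_n;\mathbb{Q})$ has the same dimension for all $n\geq N$, which is rational homological stability on the quotient in the stated range.

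Finally, for the polynomial growth of $\dim_k H^i(X_n;k)$ over an arbitrary field $k$, I appeal to the theorem of Church--Ellenberg--Farb--Nagpal in \cite{4AMIGOS} that a finitely generated FI-module over a Noetherian ring has eventually polynomial dimension after base change to any field. The main obstacle, and the reason $\mathcal{M}_{g,n}$ is singled out, is that the direct proofs of finite generation for configuration spaces, (pure) mapping class groups, and classifying spaces of diffeomorphism groups can be arranged over $\mathbb{Z}$ (so \cite{4AMIGOS} applies in every characteristic), whereas for $\mathcal{M}_{g,\bullet}$ finite generation is only established rationally, so one can only conclude polynomiality with $k=\mathbb{Q}$. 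Once the weights and stability degrees are tracked through the individual finite generation arguments, assembling the table and choosing the explicit $N$ is bookkeeping.
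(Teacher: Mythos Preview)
Your proposal is correct and matches the paper's approach exactly: the paper reduces Theorem \ref{EXA2} to finite generation via Theorems \ref{DIM2}, \ref{MCG}, \ref{PUREDIM>3}, \ref{BDRY} and \ref{DIFFDIM>3}, and then invokes \cite[Proposition 2.58 and Theorem 2.67]{3AMIGOS} together with \cite[Theorem 1.2]{4AMIGOS} for (i)--(iv) and the polynomiality statement. Two small corrections: the length bound arising from weight $\leq d$ is $\ell\leq d+1$, not $\ell\leq d$ (if $|\lambda|\leq d$ then the padded partition $\lambda[n]$ has at most $d+1$ parts), which is why the length column in Table~1 reads $2i+1$ and $i+1$; and the precise reason $\mathcal{M}_{g,n}$ is singled out is that the identification $H^*(\mathcal{M}_{g,n})\cong H^*(\PMod^n_g)$ in (\ref{moduli}) is only available over $\mathbb{Q}$, so even though $H^i(\PMod^\bullet_g;k)$ is finitely generated for every field $k$, this does not transfer to $H^i(\mathcal{M}_{g,\bullet};k)$.
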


\begin{table}[htbp]\label{TABLE}
\caption{Specific bounds}
$$
\begin{tabular}{|c|c|c|c|c|}
\hline
&  &  & &\\

{\bf $X_n$}& {\bf Hypotheses}& {\bf N}& {\bf $\ell\big(H^i(X_n;\mathbb{Q})\big)\leq$}&{\bf deg $(\chi_{n})\leq$} \\%\begin{small}{\bf Stability Type}\end{small}\\
& & & & \\
\hline
&  &  &  &  \\
 $\C_n(\Sigma)$&\begin{small}$\Sigma$ is a surface\end{small}&$5i$  &$2i+1$ &$2i$\\%  &\begin{footnotesize} $(2i,2i)$\text{ if $\Sigma$ closed}\end{footnotesize} \\
\begin{footnotesize}\textsc{(Thm \ref{DIM2})}\end{footnotesize} 
&&  $4i$& & \\
%&& \\%&\begin{footnotesize} $(0,2i)$\text{ if $\partial 
%\Sigma\neq\O$}\end{footnotesize}\\
&   &\begin{footnotesize}\text{(if $\partial \Sigma\neq\O$)}\end{footnotesize}  & & \\%&\begin{footnotesize} $(3i-1,2i)$\text{ O/W}\end{footnotesize}\\

&  &  &  & \\
 $\mathcal{M}_{g,n}$ &\begin{small}$g\geq 2$\end{small}  &$6i$  &$2i+1$ &$2i$ \\%& $(4i,2i)$\\
\begin{footnotesize}\textsc{ (Thm \ref{MCG})}\end{footnotesize}&  &  & & \\
  &  &  & & \\

$\PMod^n_{g,r}$&\begin{small} $2g+r>2$, $r>0$\end{small} & $4i$  &$2i+1$ &$2i$\\% & $(0,2i)$\\ 
\begin{footnotesize}\textsc{ (Thm \ref{BDRY})}\end{footnotesize}  &  &  &  &  \\
&  &  & & \\
%\begin{footnotesize}\text{\bf check this when $r>0$?}\end{footnotesize}&  &  & & &\\

&\begin{small} $\dim M \geq 3$; $\pi_1(M)$ and $\Mod(M)$ are of  \end{small} & $3i$  &$i+1$ &$i$\\% & $(2i,i)$\\ 

$\PMod^n(M)$&\begin{small} type $FP_\infty$, and either $\pi_1(\Diff(M))=0$ \end{small} &$2i$ &&\\
%&&\begin{footnotesize}\text{ if $\partial M\neq\O$}\end{footnotesize}  & &\\% &\begin{footnotesize} $(0,i)$\text{ if $\partial 
%M\neq\O$}\end{footnotesize}\\
%\begin{footnotesize}\texttt{ Thm \ref{PUREDIM>3}}\end{footnotesize}&  
\begin{footnotesize}\textsc{ (Thm \ref{PUREDIM>3})}\end{footnotesize} &\begin{small}or $\pi_1(M)$ has trivial center\end{small}  & \begin{footnotesize}\text{(if $\partial M\neq\O$)}\end{footnotesize}   & & \\
&  & & &\\

&\begin{small}$M$ is a smooth, compact and connected  \end{small}  & & & \\

 $\BPDiff^n(M)$&\begin{small} manifold; $\dim M\geq 3$, and $\BDiff(M \text{rel}\partial M)$  \end{small}   &$3i$  &$i+1$ &$i$\\% & $(2i,i)$\\
\begin{footnotesize}\textsc{ (Thm \ref{DIFFDIM>3})}\end{footnotesize} &\begin{small}has the homotopy type of a CW-complex with \end{small}     & & &\\
& \begin{small}  finitely many cells in each dimension\end{small}    &  & & \\
 &&  &  &  \\ 
\hline
\end{tabular}
$$
\end{table}
%\newpage

%the following table:
%\noindent{\bf Summary of specific bounds.} 

\noindent{\bf Remarks:}
\begin{itemize}

\item There is a vast literature on the cohomology of configuration spaces, with specific computations in special cases contained, for example, in \cite{COHEN_TAYLOR}, \cite{FELIX}, \cite{BOD_COHEN_TAYLOR}. Nevertheless, few explicit computations are known for the other examples in Table 1. Our main contribution is that we are able to get linear bounds  in $i$ for the degree of the polynomials $Q_i$ and the lengths of the representations. Moreover our new stable ranges for uniform representation stability are also linear in $i$, instead of the quadratic bounds in $i$ that were obtained in \cite{JIM}.

\item We believe that the representation stability results that we provide are compelling even in the cases where the homology is known. For instance, for the case of $\C_n(\mathbb{R}^2)$ where the cohomology algebra has been computed,  understanding multiplicities of specific irreducible representations is equivalent to various counts of irreducible squarefree polynomials with various properties (see \cite{3AMIGOS2}).  What representation stability says is that, in incredible generality, for many examples, these multiplicities stabilize in a very specific sense.

\item From Theorem \ref{EXA2} (iii) it follows that for each $\sigma\in S_n$, the character $\chi_{V_n}(\sigma)$ only depends on ``short cycles'', more precisely on the cycles of $\sigma$ of length $\leq $deg$(\chi_n)$.   
%We recall the precise definitions of  {\it stable range}and {\it stability type of an FI-module} in Section \ref{PRE} below.\medskip

\item Our result for $\C_n(\Sigma)$ recovers the same stable range of $n\geq 4i$ obtained in \cite[Theorem 1]{CHURCH} for the case when $\Sigma$ is a closed surface or has non-empty boundary. The statment about $\dim_k\big(H^i(\C_n(\Sigma);k)\big)$ being a polynomial in $n$ for any field $k$ is a particular case of \cite[Theorem 1.8]{4AMIGOS}. \medskip

\item The moduli space $\mathcal{M}_{g,n}$  is a rational model for the classifying space $\BPMod_{g}^n$ for $g\geq 2$ (see for example \cite{HAINLOO} or \cite{HARERmoduli}). Hence
\begin{equation}\label{moduli}
H^*(\mathcal{M}_{g,n};\mathbb{Q})\approx H^*(\PMod_{g}^n;\mathbb{Q}).
\end{equation}

Therefore, for any $i\geq 0$ the FI-module $H^i\big(\mathcal{M}_{g,\bullet}\big)$ is precisely  $H^i\big(\PMod^\bullet_{g}\big)$  and Theorem \ref{MCG} below gives the corresponding bounds for this example. This theorem implies that the corresponding consistent sequences of rational $S_n$-representations $\{H^i(\PMod_{g,r}^n;\mathbb{Q})\}$ and  $\{H^i(\mathcal{M}_{g,n};\mathbb{Q})\}$ satisfy uniform representation stability with stable range $n\geq 4i$ when $r>0$  and $n\geq 6i$ in general. This improves the stable range $$n\geq \min \{4i+2(4g-6)(4g-5),2i^2+6i\}$$ obtained in \cite[Theorem 1.1]{JIM}.

%\noindent {\bf Remark:} In \cite{PUTMAN} Putman  defined the notion of  {\it central stability}. With his method, he proves  that when $M$ is a smooth connected manifold of dimension $d\geq 2$ with nonempty boundary  the sequence $H_i(\PMod^n(M);\mathbb{Q})$ satisfies central stability. For fields this condition agrees with the structure of a finitely generated FI-module (see \cite[Introduction]{3AMIGOS}). His result covers some cases that we do not,  although he obtains stable ranges that are exponential in $i$ in contrast with our linear ranges.\medskip

\item Theorems \ref{PUREDIM>3} and \ref{DIFFDIM>3} below apply to irreducible, compact, orientable $3$-manifolds $M$ with nonempty boundary satisfying conditions (i)-(iv) in \cite[Section 3]{HATCHER_MC}.

\item It would be interesting to extend these results to local coefficient systems.  
\end{itemize}

%\noindent\textbf{Cohomology of configuration spaces.}  We recall examples (1) and (2) in Section \ref{EG1} as ingredients for the other examples. In the case of configuration spaces for manifolds of dimension $2$ we give a direct proof of finite generation in order to compute the bounds mentioned in Table 1. 

%\noindent{\bf Cohomology of $\mathcal{M}_{g,n}$, $\PMod^n(M)$ and $\BPDiff^n(M)$.}
%For examples (3) and (5) we give direct proofs of finite generation and obtain the following specific bounds (see Section \ref{EG2}).

\noindent{\bf A unified approach.}  In this paper we develop a unified approach to proving finite generation for FI-modules that arise as in the examples above.  In Section \ref{SS} we present a general spectral sequence argument that allows us to prove finite generation for our examples in Theorems \ref{DIM2},  \ref{MCG}, \ref{PUREDIM>3} and \ref{DIFFDIM>3}. Furthermore, this approach applies to spectral sequences arising from ``FI-fibrations'' over a fixed space and ``FI-group extensions'' of a given group (see Section \ref{FIBSPEC}). 

The basic idea is to use a spectral sequence of FI-modules converging to the graded FI-module of interest. We then use knowledge about finite generation of the FI-modules in the $E_2$-page and an inductive process together with closure properties of finite generation under subquotients and extensions to get our conclusion. The main difference between  Theorems \ref{DIM2} and the other theorems are the type of spectral sequence that we use and the way that finite generation is proved for the $E_2$-page. 

Given finite generation, the conclusions in Theorem \ref{EXA2} are consequences of \cite [Proposition 2.58  and Theorem 2.67]{3AMIGOS} and \cite[Theorem 1.2]{4AMIGOS}. 
\medskip

\noindent {\bf FI$[G]$-modules.} Let $G$ be a group. In Section \ref{FIGMod} we  introduce the notion of an {\it $\FIG$-module}: it is a functor $V$ from the category {\bf FI} to the category {\bf G-Mod} of $G$-modules over $R$. This definition incorporates the action of a group $G$ on our sequences of $S_n$-representations and allows us to take $V$ as twisted coefficients for cohomology. For $X$, a path-connected space with fundamental group $G$, and $p\geq 0$, we are interested in the FI-module $H^p(X;V)$ over $R$ given by $\mathbf{n}\mapsto H^p(X;V_n)$.
Our major result in Section \ref{FIGMod} is Theorem \ref{FIG} which  uses finite generation of an FI$[G]$-module $V$ to obtain finite generation and, when $R=\mathbb{Q}$,  specific bounds for the new FI-modules $H^p(X;V)$. This is our tool to prove  the base of the induction in the spectral sequence argument for Theorems \ref{MCG}, \ref{PUREDIM>3} and \ref{DIFFDIM>3}.\medskip

\noindent{\bf Remark:} It was pointed out to me by Ian Hambleton that FI$[G]$-modules can be understood in the framework of modules over EI-categories. 
An {\it EI-category $\Gamma$} is a small category in which each endomorphism is an isomorphism. An FI$[G]$-module corresponds to a left $R\Gamma$-module, where $R$ is the group ring $\mathbb{Z}G$ and $\Gamma$ is the EI-category $\textbf{FI}$. The theory of $R\Gamma$-modules  and its homological algebra have been developed and applied in the context of transformation groups (see for example \cite[Chapter I.11]{DIECK}).\medskip

\noindent{\bf Manifolds with boundary.} If we assume that $M$ is a manifold with non-empty boundary, the examples above of configuration spaces and pure mapping class groups have the extra structure of an FI\#-module that allows us to conclude the following results from the arguments in Section \ref{BDRYFIsharp}.

\begin{theo}\label{BETTI}
Let $\Sigma=\Sigma_{g,r}$ be a connected compact oriented surface with non-empty boundary ($r>0$).  For any $i\geq 0$ and $n\geq 0$, each of the following invariants of $\PMod^n_{g,r}$ is given by a polynomial in $n$ of degree at most $2i$:
\begin{itemize}
\item The $i^{th}$ rational Betti number $b_i(\PMod^n_{g,r})$ and the $i^{th}$ mod-$p$ Betti number of $\PMod^n_{g,r}$.
\item The rank of $H^i(\PMod^n_{g,r};\mathbb{Z})$ and the rank of the $p$-torsion part of $H^i(\PMod^n_{g,r}\mathbb{Z})$.
\end{itemize}
\end{theo}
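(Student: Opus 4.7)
The plan is to upgrade the co-FI-group structure on $\PMod^\bullet_{g,r}$ to the stronger structure of an FI\#-module on cohomology when $r>0$, and then apply the structure theorem for finitely generated FI\#-modules from Church--Ellenberg--Farb.

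First, I would describe the FI\#-module structure. When $\partial\Sigma_{g,r}\neq\emptyset$ one can embed a collar of a boundary component and use it to produce any requested number of additional marked points in a region disjoint from the existing configuration. For each partial injection between $[m]$ and $[n]$ one combines the ordinary restriction on the matched indices with an embedding of the unmatched indices into the boundary collar, obtaining a morphism of pure mapping class groups. This is the same mechanism that puts an FI\#-structure on $H^*(\C_\bullet(\Sigma);R)$ in Section \ref{BDRYFIsharp}, and it endows $H^i(\PMod^\bullet_{g,r};R)$ with the structure of an FI\#-module over any commutative ring $R$. Finite generation and weight bounds $\leq 2i$ are inherited from the FI-module arguments behind Theorem \ref{BDRY}, once they are verified to hold over an arbitrary Noetherian $R$.

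Second, I would invoke the CEF structure theorem: any finitely generated FI\#-module $V$ over a commutative ring $R$ decomposes uniquely as $V\cong\bigoplus_m M(W_m)$, where each $W_m$ is an $R[S_m]$-module, only finitely many are nonzero, and
$$M(W_m)_n=\Ind_{S_m\times S_{n-m}}^{S_n}\bigl(W_m\boxtimes R\bigr).$$
In particular $\mathrm{rank}_R(V_n)=\sum_m\binom{n}{m}\,\mathrm{rank}_R(W_m)$, a polynomial in $n$ of degree equal to the largest $m$ with $W_m\neq 0$, which is at most the weight of $V$.

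Finally, I would specialize $R$ to extract the four invariants. Taking $R=\mathbb{Q}$ and $R=\mathbb{F}_p$ yields $b_i$ and the mod-$p$ Betti number respectively, each a polynomial in $n$ of degree $\leq 2i$. Taking $R=\mathbb{Z}$ gives a decomposition $H^i(\PMod^\bullet_{g,r};\mathbb{Z})\cong\bigoplus_m M(W_m)$; the $\mathbb{Z}$-rank is $\sum_m\binom{n}{m}\mathrm{rank}_\mathbb{Z}(W_m)$, and since the $p$-primary torsion is the FI\#-submodule $\bigoplus_m M\bigl((W_m)_{p\text{-tors}}\bigr)$, its $\mathbb{Z}_p$-rank is $\sum_m\binom{n}{m}\,r_p(W_m)$; both are polynomials in $n$ of degree at most $2i$. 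The main obstacle will be the first step: constructing the FI\#-structure with integral or modular coefficients and checking that the generation and weight bounds from Theorem \ref{BDRY}, originally proven for rational cohomology, transfer to these coefficient rings. Once this is in place, the polynomial conclusions follow directly from the structure theorem.
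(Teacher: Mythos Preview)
Your proposal is correct and follows the same approach as the paper: the FI\#-structure via the boundary collar is Proposition~\ref{BDRY}, the finite-generation bound in degree $\leq 2i$ over any field or $\mathbb{Z}$ is Theorem~\ref{MCGsharp} (proved exactly by the FI\#-spectral-sequence transfer you flag as the main obstacle, combining Theorem~\ref{CONF_BDRY} for the fiber with the argument at the end of Section~\ref{FIGMod}), and the polynomial conclusion then follows from the structure theorem \cite[Theorem~2.24]{3AMIGOS}. One small correction: ``weight'' is defined only over fields of characteristic zero, so for $k=\mathbb{F}_p$ or $k=\mathbb{Z}$ the relevant statement is that the FI\#-module is finitely generated in degree $\leq 2i$, not that it has weight $\leq 2i$.
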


\begin{theo}\label{BETTI2}
Let $M$ be a manifold with non-empty boundary that satisfies the hypothesis of Theorem \ref{PUREDIM>3}.  For $n\geq 0$ each of the following is given by a polynomial in $n$:
\begin{itemize}
\item The $i^{th}$ rational Betti number $b_i(\PMod^n(M))$ and the $i^{th}$ mod-$p$ Betti number of $\PMod^n(M)$. 
\item The rank of $H^i(\PMod^n(M);\mathbb{Z})$ and the rank of the $p$-torsion part of $H^i(\PMod^n(M);\mathbb{Z})$.
\end{itemize}
The polynomial is of degree at most $i$ for rational Betti numbers and degree at most $2i$ in the other cases.
\end{theo}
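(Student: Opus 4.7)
The plan is to mirror the strategy underlying Theorem \ref{BETTI}, transplanted to the higher-dimensional setting. The key structural input is that when $\partial M \neq \emptyset$, the co-FI-group $\PMod^\bullet(M)$ extends to a co-FI\#-group: a collar neighbourhood of $\partial M$ allows one to absorb a marked point into the boundary or re-introduce it, yielding the ``semi-covariant'' morphisms that enlarge FI to FI\#. Cohomology with coefficients in any commutative ring $R$ then promotes $H^i(\PMod^\bullet(M); R)$ to an FI\#-module, and this is precisely the additional structure exploited in Section \ref{BDRYFIsharp} to prove Theorem \ref{BETTI}.

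Next I would establish finite generation of $H^i(\PMod^\bullet(M); R)$ as an FI-module over $R = \mathbb{Q}$, $R = \mathbb{F}_p$, and $R = \mathbb{Z}$. The spectral sequence argument behind Theorem \ref{PUREDIM>3}, once the FI$[G]$-module inputs of Section \ref{FIGMod} are stated integrally, runs over any Noetherian coefficient ring; the Noetherianity of the category of FI-modules over $\mathbb{Z}$ proved in \cite{4AMIGOS} supplies the closure under subquotients and extensions needed to carry the induction through the pages.

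With finite generation in hand, I would invoke the structure theorem for finitely generated FI\#-modules from \cite{3AMIGOS}, which states that the rank of the $n$th component (over a field, or over $\mathbb{Z}$) is given by a polynomial in $n$ valid for \emph{all} $n \geq 0$, not merely for large $n$. Applying this successively to $H^i(\PMod^\bullet(M); \mathbb{Q})$, to $H^i(\PMod^\bullet(M); \mathbb{F}_p)$, and to the torsion-free and $p$-torsion parts of $H^i(\PMod^\bullet(M); \mathbb{Z})$ yields all four polynomial descriptions in the statement.

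For the degree bounds, in the rational case Theorem \ref{EXA2}(iii) together with Table 1 gives $\deg(\chi_n) \leq i$, which caps the Hilbert polynomial at degree $i$. In the integral and mod-$p$ cases the sharper representation-theoretic estimate is unavailable, so we fall back on the weight bound extracted directly from the integral spectral sequence argument, which provides generation degree at most $2i$ and hence polynomial degree at most $2i$ via the FI\#-structure theorem. The main obstacle I anticipate is verifying that the integral version of the spectral sequence argument carries the same weight estimates as its rational counterpart; this reduces to confirming that the FI$[G]$-module input of Theorem \ref{FIG}, together with each differential in the Serre spectral sequence of the FI-group extension used for Theorem \ref{PUREDIM>3}, preserves the generation-degree bound $2i$ integrally.
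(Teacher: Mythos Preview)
Your outline matches the paper's strategy closely: Proposition~\ref{BDRY} supplies the FI\#-structure, the Birman exact sequence~(\ref{BIRDIM>3}) gives a co-FI\#-group extension of $\Mod(M)$, and one then runs the spectral-sequence argument and appeals to the classification of FI\#-modules \cite[Theorem~2.24]{3AMIGOS} to extract the polynomial. The rational degree bound $\leq i$ is obtained exactly as you say, via the weight bound of Theorem~\ref{PUREDIM>3} (equivalently, Table~1).

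The one place where your argument diverges from the paper, and where you correctly sense trouble, is the mechanism for the bound $\leq 2i$ over $\mathbb{F}_p$ and $\mathbb{Z}$. You propose to push generation degree through the spectral sequence using Noetherianity from \cite{4AMIGOS}. That yields finite generation, but Noetherianity gives \emph{no} control on the degree of generation of a sub-FI-module, and ``weight'' is a characteristic-zero invariant, so it is not available integrally. The obstacle you flag is real and cannot be removed along the lines you suggest.

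The paper circumvents it by a dimension-counting trick that exploits the FI\#-structure only at the two ends of the spectral sequence (Theorem~\ref{SPECFIsharp} together with Theorem~\ref{FIsharpG}). Concretely: the fiber FI\#-module $H^q(\pi_1(M)^\bullet;k)$ is finitely generated in degree $\leq q$, so by \cite[Corollary~2.27]{3AMIGOS} one has $\dim_k E_2^{p,q}(n)=O(n^{q})$; dimension over a field (or rank over $\mathbb{Z}$) does not increase under subquotients, so $\dim_k E_\infty^{p,q}(n)=O(n^{q})$ even though the intermediate pages $E_r^{p,q}$ need not be FI\#-modules; summing along the filtration gives $\dim_k H^i(\PMod^n(M);k)=O(n^{i})$; and since the abutment \emph{is} an FI\#-module, \cite[Corollary~2.27]{3AMIGOS} converts this back into a generation-degree bound. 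This is the content of Theorem~\ref{PUREDIM>3sharp}, from which Theorem~\ref{BETTI2} follows immediately. (The paper records the cruder bound $2i$, uniform with the surface case, rather than the sharper $i$ this argument actually yields here.)
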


%\newpage

\noindent{\bf Closed Surfaces.} 
For a fixed $n \geq 0$, we can relate the mapping class group of a closed surface with the one of a surface with non-empty boundary.  Let $$\delta_g:\PMod^n_{g,1}\rightarrow\PMod^n_{g}$$
be the group homomorphism induced by gluing a disk to the boundary component. %And, whenever $r>0$, let
%$$\beta_g:\PMod^n_{g,r}\rightarrow\PMod^n_{g,r+1}$$
%be the group homomorphism induced by gluing a pair of pants to one of the boundary components.
The following result is part of the so called Harer's stability Theorem and was proved initially by Harer (\cite{HARER}). A proof of it with the improved bounds that we use can be found in \cite{WAHL}.

\begin{theo} %[Harer]
If $i\leq \frac{2}{3} g$,  we have following isomorphism:
%\begin{itemize}
%\item[(a)] $H_i(\beta_g):H_i(\PMod^n_{g,r};\mathbb{Z})\rightarrow H_i(\PMod^n_{g,r+1};\mathbb{Z})$, if $r>0$.
%\item[(b)] 
$$H_i(\delta_g):H_i(\PMod^n_{g,1};\mathbb{Z})\rightarrow H_i(\PMod^n_{g};\mathbb{Z}).$$
%\end{itemize}
\end{theo}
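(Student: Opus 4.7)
The plan is to analyze $\delta_g$ through a Hochschild--Serre spectral sequence argument combined with Harer--Ivanov--Wahl stability for $\PMod^n_{g,r}$ when $r\geq 1$. Let $\mathfrak{p}=\{p_1,\ldots,p_n\}$ be the marked configuration. Capping the boundary of $\Sigma_{g,1}$ with a disk replaces the boundary component by a point equipped with a tangent frame; up to homotopy this yields a fibration of classifying spaces
$$UT(\Sigma_g\setminus\mathfrak{p}) \to B\PMod^n_{g,1} \to B\PMod^n_g,$$
where $UT$ denotes the unit tangent bundle. Passing to fundamental groups gives a short exact sequence
$$1 \to \pi_1\bigl(UT(\Sigma_g\setminus\mathfrak{p})\bigr) \to \PMod^n_{g,1} \to \PMod^n_g \to 1,$$
whose kernel itself fits into a central extension $1\to\mathbb{Z}\to\pi_1(UT(\Sigma_g\setminus\mathfrak{p}))\to\pi_1(\Sigma_g\setminus\mathfrak{p})\to 1$, the $\mathbb{Z}$ being generated by the Dehn twist about a curve parallel to the former boundary.

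Next I would feed the above extension into the Hochschild--Serre spectral sequence with integer coefficients. Since $\pi_1(UT(\Sigma_g\setminus\mathfrak{p}))$ is the fundamental group of an aspherical space, its homology is concentrated in a bounded range of degrees. To conclude that $H_i(\delta_g)$ is an isomorphism for $i\leq 2g/3$, one inductively combines the spectral sequence with Harer--Ivanov--Wahl homological stability for $\PMod^n_{g,r}$ as the genus and number of boundary components vary, together with the Birman exact sequence controlling the effect of forgetting one marked point. These ingredients propagate through the spectral sequence to force the contributions of the positive rows to cancel in the claimed range, so that the edge map $H_i(\PMod^n_{g,1})\to H_i(\PMod^n_g)$ is an isomorphism.

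The main obstacle, and where Wahl's refinement provides the sharp $2/3$ slope, is establishing this underlying homological stability for $\PMod^n_{g,r}$. The standard strategy is to let the mapping class group act on a highly connected simplicial complex (of arcs, tethers, or cut systems on $\Sigma_{g,r}$) whose simplex stabilizers are mapping class groups of subsurfaces of smaller complexity. The equivariant homology spectral sequence for this action, combined with an induction on genus, compares $H_*(\PMod^n_{g,r})$ across the stabilization maps, with the connectivity estimate for the arc complex dictating the slope of the stable range; Wahl's optimization of this connectivity estimate is what yields the $2/3$ slope rather than Harer's original weaker bound. The presence of the $n$ marked points does not interfere: the arcs may be chosen in the complement of $\mathfrak{p}$, and the marked points are carried as passive labels by all stabilizer subgroups throughout the induction.
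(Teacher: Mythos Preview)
The paper does not prove this statement at all: it is quoted as a known result, attributed to Harer with the improved $2g/3$ range taken from Wahl, and no argument is given beyond the citations. So there is nothing in the paper to compare your proposal against.

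As for the proposal itself, you correctly identify the short exact sequence
\[
1 \longrightarrow \pi_1\bigl(UT(\Sigma_g\setminus\mathfrak{p})\bigr) \longrightarrow \PMod^n_{g,1} \xrightarrow{\ \delta_g\ } \PMod^n_g \longrightarrow 1
\]
and the relevance of the arc--complex stability machinery for the bordered case. However, the Hochschild--Serre argument you outline runs into a circularity: the $E^2$-page is $H_p\bigl(\PMod^n_g;\,H_q(K)\bigr)$, i.e.\ twisted homology of the \emph{closed} pure mapping class group, which is precisely the object you are trying to control. Saying that the positive rows ``cancel in the claimed range'' is exactly the hard part, and nothing in your sketch explains why this should happen; the coefficients $H_q(K)$ are genuinely nontrivial $\PMod^n_g$-modules, so one cannot simply read this off from stability with constant coefficients.

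The way Harer and Wahl actually handle the closed case is different: rather than running Hochschild--Serre on $\delta_g$, they let $\PMod^n_g$ act on a highly connected complex built from isotopy classes of nonseparating simple closed curves on the closed surface (arcs are unavailable since there is no boundary). Vertex stabilizers are then pure mapping class groups of cut-open surfaces, which \emph{do} have boundary, and the already-established bordered stability feeds into the equivariant spectral sequence for this action. Your last paragraph describes this mechanism correctly for $r\geq 1$, but the passage to $r=0$ requires this separate curve-complex input, not the Hochschild--Serre sequence of $\delta_g$.
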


When the genus of the surface is large, by combining the previous result with Theorem \ref{BETTI} we obtain the following information in the case of closed surfaces. 

\begin{theo}\label{BETTI3}
If  $g\geq\max\{2,\frac{3}{2}i\}$, then each of the following invariants of $\PMod^n_{g}$ is given by a polynomial in $n$ for $n\geq 0$:
\begin{itemize}
\item the $i$-th rational Betti number $b_i(\PMod^n_{g})$
\item the rank of $H^i(\PMod^n_{g};\mathbb{Z})$
\item the rank of the $p$-torsion part of $H^i(\PMod^n_{g};\mathbb{Z})$
\end{itemize}
In each case the polynomial is of degree at most $2i$.
\end{theo}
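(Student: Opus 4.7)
The plan is to reduce Theorem \ref{BETTI3} directly to Theorem \ref{BETTI} in the case $r=1$ by transporting the polynomial description along the Harer stability isomorphism induced by $\delta_g$. Under the hypothesis $g \geq \max\{2,\tfrac{3}{2}i\}$, both degrees $i$ and $i-1$ satisfy the Harer bound $j \leq \tfrac{2}{3}g$, so for \emph{every} $n \geq 0$ the homomorphism $\delta_g\colon \PMod^n_{g,1}\to \PMod^n_g$ induces isomorphisms on integral homology in degrees $i$ and $i-1$. This uniformity in $n$ is essential: since neither $\delta_g$ nor its stable range depends on $n$, the resulting isomorphisms of abelian groups can be compared across all values of $n$ at once.

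The next step is to transfer these homological isomorphisms to cohomology via the Universal Coefficient Theorem. Because
$$H^i(G;\mathbb{Z})\cong \Hom\bigl(H_i(G;\mathbb{Z}),\mathbb{Z}\bigr)\oplus \mathrm{Ext}^1\bigl(H_{i-1}(G;\mathbb{Z}),\mathbb{Z}\bigr),$$
the isomorphisms in degrees $i$ and $i-1$ assemble into an isomorphism of abelian groups $H^i(\PMod^n_{g,1};\mathbb{Z})\cong H^i(\PMod^n_g;\mathbb{Z})$ for every $n\geq 0$. In particular, the rank of $H^i(\PMod^n_g;\mathbb{Z})$ and the rank of its $p$-torsion part coincide with the corresponding invariants of $\PMod^n_{g,1}$. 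Tensoring the degree-$i$ homology isomorphism with $\mathbb{Q}$ yields the analogous equality of rational Betti numbers.

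Finally, applying Theorem \ref{BETTI} with $r=1$ shows that each of the three invariants of $\PMod^n_{g,1}$ is a polynomial in $n$ of degree at most $2i$ for all $n\geq 0$, and combining with the previous step gives the same conclusion for $\PMod^n_g$. There is no serious obstacle here beyond bookkeeping; the one point worth verifying is that Harer's range must cover degree $i-1$ in addition to degree $i$ so that UCT transports cohomology faithfully, and this is automatic since $g \geq \tfrac{3}{2}i$ comfortably implies $i-1 \leq \tfrac{2}{3}g$. The degree bound $2i$ is inherited verbatim from Theorem \ref{BETTI}.
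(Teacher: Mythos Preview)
Your proposal is correct and follows exactly the approach the paper sketches: combine Harer's stability isomorphism $H_i(\delta_g)$ with Theorem~\ref{BETTI} for $r=1$. You have also supplied the details the paper leaves implicit---namely the Universal Coefficient Theorem step and the check that the hypothesis $g\geq \tfrac{3}{2}i$ guarantees Harer stability in both degrees $i$ and $i-1$, which is needed to control the torsion of $H^i$.
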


\subsection{Cohomological stability of some wreath products}

{\bf Notation: } The {\it surface pure braid group} is the group $\pi_1\big(\C_n(\Sigma_{g,r})\big)$ and will be denoted by $P_n(\Sigma_{g,r})$. The {\it surface braid group} is  $\pi_1\big(\C_n(\Sigma_{g,r})/S_n\big)$ and we use $B_n(\Sigma_{g,r})$ to denote it. When $g=0$ and $r=1$, these are the pure braid group $P_n$ and the braid group $B_n$, respectively.  On the other hand, the {\it braid permutation group} $\Sigma_n^+$  is the group of string motions that preserve orientation of the circles (see \cite[Section 8]{WILSON} for a precise definition).\bigskip 

\noindent Let $\{K_n\}$ be a sequence of groups with surjections  $K_n\twoheadrightarrow S_n$. Given a group $G$ the wreath product $G\wr K_n$ is the semidirect product $G^n\rtimes K_n$, where $K_n$ acts on $G^n$ through the surjection $K_n\twoheadrightarrow S_n$. In Section \ref{WREATH} we discuss how our previous results and the closure of finite generation of  FI-modules under tensor products can be used to get information about homological stability of some wreath products. 

\medskip
 
\begin{theo}\label{COHOWREATH} Let $G$ be any group of type $FP_\infty$ and let $K_n$ be one of the following groups:  
\begin{itemize}
\item[(i)] The symmetric group $S_n$,
\item[(ii)] The surface braid group $B_n(\Sigma_{g,r})$, with $g,r\geq 0$,
\item[(iii)] The mapping class group $\Mod_{g,r}^n$, with $2g+r>2$,
\item[(iv)] The mapping class group $\Mod^n(M)$, where $M$ is a smooth connected manifold of dimension $d\geq 3$ such that the hypotheses in Theorem \ref{PUREDIM>3} are satisfied, %$\pi_1(M)$ is of type $FP_{\infty}$ ,$\pi_1(M)$ has trivial center or $\Diff(M)$ is simply connected, and $\Mod(M)$ is a group of type $FP_{\infty}$.
\item[(v)] The braid permutation group $\Sigma_n^+$.
\end{itemize}

%Then for any $i\geq 0$ and any finite dimensionpartition $\lambda$ we have that $$H^i\big(G\wr K_n;V(\lambda)_n\big)\approx H^i\big(G\wr K_{n+1};V(\lambda)_n\big),$$  for any $n$ sufficiently large.
%\end{theo}

%\noindent{\bf Remark: }

%In particular, plugging in the trivial representation for $V(\lambda)_n$, we obtain the following.
 
%\begin{cor} Let $K_n$ be any of the groups above and let $G$ be a group of type $FP_\infty$. 
Then the wreath product $G\wr K_n$ satisfies rational homological stability.
\end{theo}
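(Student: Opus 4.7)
The strategy is to express $H^i(G \wr K_n;\mathbb{Q})$ as the $S_n$-invariants of a finitely generated FI-module, and then invoke the stability consequences of Theorem \ref{EXA2}. Let $P_n = \ker(K_n \twoheadrightarrow S_n)$. Since the action of $K_n$ on $G^n$ factors through $S_n$, the normal subgroup $P_n$ acts trivially on $G^n$, and there is a short exact sequence
$$1 \longrightarrow G^n \times P_n \longrightarrow G \wr K_n \longrightarrow S_n \longrightarrow 1.$$
Because $S_n$ is finite and $\mathbb{Q}[S_n]$ is semisimple, higher group cohomology with rational coefficients vanishes, so the Lyndon--Hochschild--Serre spectral sequence collapses onto its $E_2^{0,\ast}$ column. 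Combined with the Künneth formula this gives
$$H^i(G \wr K_n;\mathbb{Q}) \;\cong\; \biggl(\bigoplus_{p+q=i} H^p(G^n;\mathbb{Q}) \otimes H^q(P_n;\mathbb{Q})\biggr)^{S_n}.$$

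Next I would assemble the two tensor factors into FI-modules and check that each is finitely generated. The co-FI-group $G^\bullet$ with restrictions given by projection onto coordinates induces the FI-module $H^p(G^\bullet;\mathbb{Q})$; finite generation follows from the $FP_\infty$ hypothesis on $G$ (so each $H^r(G;\mathbb{Q})$ is finite dimensional) together with the observation, via Künneth, that in degree $p$ only subsets of $[n]$ of size at most $p$ contribute, so finitely many generators in degrees $\leq p$ suffice. For $P_\bullet$, the kernel of $K_\bullet \to S_\bullet$ is in each case precisely a co-FI-group already treated in the paper: $P_n$ is trivial in case (i); $P_n = \pi_1(\C_n(\Sigma))$ in case (ii), handled by Theorem \ref{DIM2}; $P_n = \PMod_{g,r}^n$ in case (iii), handled by Theorem \ref{MCG} (for $r=0$) or \ref{BDRY} (for $r>0$); $P_n = \PMod^n(M)$ in case (iv), handled by Theorem \ref{PUREDIM>3}; and in case (v) the corresponding finite generation for the pure braid permutation group FI-module is due to Wilson \cite{WILSON}. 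By closure of finite generation of FI-modules under tensor products \cite{3AMIGOS}, the FI-module
$$V^{(i)}_\bullet \;:=\; \bigoplus_{p+q=i} H^p(G^\bullet;\mathbb{Q}) \otimes H^q(P_\bullet;\mathbb{Q})$$
is finitely generated.

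To conclude, any finitely generated FI-module $V$ over $\mathbb{Q}$ has $\dim_\mathbb{Q} V_n^{S_n}$ eventually constant: by the polynomial character formula of Theorem \ref{EXA2}(iii), the multiplicity of the trivial $S_n$-representation in $V_n$ stabilizes for $n$ large. Applying this to $V^{(i)}$ and using the isomorphism above yields rational homological stability for $\{G \wr K_n\}$.

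The main obstacle is a bookkeeping one in case (v): one must match the FI-structure on the kernel of $\Sigma_n^+ \twoheadrightarrow S_n$ with the FI-structure treated in Wilson's work. The analogous compatibility check in cases (i)--(iv) is essentially tautological, since the relevant co-FI-structure on $P_\bullet$ arises canonically from adding or forgetting the marked or configuration points, and coincides with the one used in the referenced theorems.
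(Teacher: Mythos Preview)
Your proposal is correct and follows essentially the same argument as the paper: both use the extension $1\to G^n\times P_n\to G\wr K_n\to S_n\to 1$, apply K\"unneth and closure of finite generation under tensor products to the FI-module $\bigoplus_{p+q=i}H^p(G^\bullet)\otimes H^q(P_\bullet)$, and then pass to $S_n$-invariants (the paper phrases this last step via transfer and uniform representation stability rather than LHS collapse, but these are equivalent over $\mathbb{Q}$). One small citation issue: Theorem~\ref{EXA2}(iii) as stated applies only to the specific sequences in Table~1, so for the general fact that multiplicities in a finitely generated FI-module over $\mathbb{Q}$ stabilize you should cite \cite[Proposition~2.58 or Theorem~2.67]{3AMIGOS} directly.
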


\noindent {\bf Remarks:}
In general we do not have explicit stable ranges. The following is known about stable ranges:
\begin{itemize}
\item For (i) we get the stable range $n\geq 2i$. Homological stability is known to hold integrally in this case for $n\geq 2i+1$ (see \cite[Propositions 1.6]{HATCHER_WAHL}). Therefore our bound suggests that the possible failure of injectivity when $n=2i$ should come from torsion.
%\item For (i) we get the  stable range $n\geq 2i$. Integrally, homological stability is known to hold for $n\geq 2i+1$ (see \cite[Propositions 1.6]{HATCHER_WAHL}), therefore our bound suggest that the  possible failure of injectivity when $n=2i$ should come from torsion.
\item For the case (ii), Hatcher--Wahl have shown that if $r>0$ the group  $G\wr B_n(\Sigma_{g,r})$ satisfies integral homological stability when $n\geq 2i+1$ (\cite[Propositions 1.7]{HATCHER_WAHL}). Rationally, the stable range has been improved to $n\geq 2i$ by Randall-Williams (see  \cite[Theorem A]{OSCAR}).% {\bf Maybe explain how to relate both}
%\item  For (ii), when the surface has non-empty boundary ($r>0$), our method gives the stable range $n\geq 4i$. Nevertheless, Hatcher-Wall have already shown that when $r>0$, the group  $G\wr B_n(\Sigma_{g,r})$ satisfies integral homological stability when $n\geq 2i+1$ (\cite[Propositions 1.7]{HATCHER_WAHL}). Rationally, the stable range has been improved to $n\geq 2i$ by Randall-Williams (see  \cite[Theorem A]{OSCAR}) {\bf Maybe explain how to relate both}
%\item  When $r>0$ in case (iii) we get the stable range $n\geq 4i$.
%\item  If $\partial M\neq\O$ in case (iv), we obtain the stable range $n\geq 2i$.
\end{itemize}

\subsection{Speculation on the existence of non-tautological classes in $\mathcal{M}_{g,n}$}
The {\it tautological ring} of $\mathcal{M}_{g,n}$ is defined to be a subring $\mathcal{RH}^*(\mathcal{M}_{g,n})$ of the cohomology ring $H^*\big(\mathcal{M}_{g,n};\mathbb{Q}\big)$ generated by certain ``geometric classes'': the kappa-classes $\kappa_j\in H^{2j}\big(\mathcal{M}_{g,n};\mathbb{Q}\big)$, for $j\geq 0$, and the psi-classes $\psi_i\in H^{2}\big(\mathcal{M}_{g,n};\mathbb{Q}\big)$, for $1\leq i\leq n$.  In $\mathcal{RH}^*(\mathcal{M}_{g,n})$, the class $\kappa_j$ has grading $j$ and $\psi_i$ has grading 1 (half the cohomological grading).  We refer the reader to \cite[Section 1]{FABERPAN} for precise definitions of the tautological rings of $\mathcal{M}_{g,n}$ and $\overline{\mathcal{M}}_{g,n}$.

In \cite[Section 5.1]{3AMIGOS} it is proved that $\mathcal{RH}^*(\mathcal{M}_{g,\bullet})$ is a graded FI-module of finite type for $g\geq 2$. This follows from the fact that this graded FI-module is a quotient of the free commutative algebra $$\mathbb{Q}\big[\{\kappa_j:j\geq 0\}\cup\{\psi_i:1\leq i\leq n\}\big],$$ where $S_n$ acts trivially on the kappa-classes  and permutes the psi-classes. From this description, we can see that for any $k\geq 0$ the weight of the FI-module  $\mathcal{RH}^k(\mathcal{M}_{g,\bullet})$ is at most $k$. As a consequence we obtain an upper bound for the length of the representation $\ell\big(\mathcal{RH}^k(\mathcal{M}_{g,n})\big)\leq k+1$.

On the other hand, Faber and Pandharipande studied in \cite{FABERPAN} the $S_n$-action on $H^*(\overline{\mathcal{M}}_{g,n};\mathbb{Q})$  and get an upper bound for the length of the irreducible representations occurring in the tautological ring $\mathcal{RH}^*(\overline{\mathcal{M}}_{g,n})$. Their interest is to exhibit, by other methods (counting, boundary geometry), several classes of Hodge type that cannot be tautological classes because the lengths of the corresponding $S_n$-representations are larger than their upper bound. In particular, they have established the existence of many non-tautological cohomology classes on $\overline{\mathcal{M}}_{2,21}$.  They obtained that $\ell\big(\mathcal{RH}^k(\overline{\mathcal{M}}_{g,n})\big)\leq k+1$ (\cite[Section 4]{FABERPAN}). Since $\mathcal{RH}^k(\overline{\mathcal{M}}_{g,n})$ surjects onto $\mathcal{RH}^k(\mathcal{M}_{g,n})$, that implies that $\ell\big(\mathcal{R}^k(\mathcal{M}_{g,n})\big)\leq k+1$, which is the same bound that we obtained directly with the FI-module approach. In contrast, their method involves studying representations induced from the boundary strata.

Finally we would like to point out that from Table 1 we have the upper bounds  $$\ell\big(H^{2k}(\mathcal{M}_{g,n};\mathbb{Q})\big)\leq 4k+1.$$ This, contrasted with $\ell\big(\mathcal{RH}^k(\mathcal{M}_{g,n})\big)\leq k+1$, suggests that there is room for the existence of non-tautological classes $\mathcal{M}_{g,n}$ and that an approach \`a la  Faber and Pandharipande could demonstrate that some explicit classes are non-tautological. However, we have no indication that our bounds are sharp. As matter of fact,  the only completely known case shows evidence of the contrary since $H^{2}\big(\mathcal{M}_{g,n};\mathbb{Q}\big) =\mathcal{RH}^1(\mathcal{M}_{g,n})$ %=V(0)\oplus V(1) \oplus V(1)$$ 
 has length $2$. \bigskip

\noindent{\bf Acknowledgements.} I am grateful to Tom Church, Benson Farb, Peter May and Jenny Wilson for many useful discussions and comments on earlier drafts of this paper. I also want to thank Jordan Ellenberg for his recommendation of the notation for FI$[G]$-modules and Ian Hambleton for helpful comments. I am indebted to the anonymous referees for their corrections and suggestions.

\section{Preliminaries}\label{PRE}

In this section we sumarize notions introduced in \cite{3AMIGOS} and \cite{CHURCH_FARB} and set notation that will be recurrent in the rest of the paper. We refer the interested reader to \cite[Sections 1 \& 2]{3AMIGOS} for precise results and proofs.

\subsection{FI and FI\#-modules}\label{FI}

Let {\bf FI} be the category whose objects are natural numbers {\bf n} and whose morphisms {\bf m}$\rightarrow${\bf n} are injections from $[m]:=\{1,\ldots, m\}$ to $[n]:=\{1,\ldots, n\}$.  Similarly we denote by {\bf FI\#}  the category whose objects are natural numbers {\bf n} and the morphisms {\bf m }$ \mapsto ${ \bf n} are  triples $(A,B,\psi)$, where $A\subset [m]$, $B\subset [n]$ and $\psi: A\rightarrow B$ is a bijection.

\begin{definition} 
An {\it FI-module} over a commutative ring $R$ is a functor $V$ from the category {\bf FI} to the category  {\bf Mod}$_R$ of modules over $R$.   An  {\it FI\#-module} over $R$ is a functor $V$ from {\bf FI\#} to {\bf Mod}$_R$. We denote $V(${\bf n}) by $V_n$ and $V(f)$ by $f_*$, %:V_m\rightarrow V_n$
  for any $f\in\text{Hom}_{\text {FI}}(\mathbf{m},\mathbf{n})$. In the same manner a functor $V=\bigoplus V^i$ from {\bf FI} to the category of graded modules over $R$ is called a {\it graded FI-module} over $R$. In particular, each $V^i$ is an FI-module over $R$.
\end{definition}

The category {\bf FI-Mod}$_R$ of FI-modules over $R$ is an abelian category. The concepts of kernel, cokernel, sub-FI-module, quotient, injection and surjection are defined ``pointwise'' . 
%The notation adopted below corresponds to the one for the case when $k$ is a field. See Notational conventions in \cite[Section 2.1]{3AMIGOS}.
%For us $k$ will be either a field or $\mathbb{Z}$. 
Most of the examples that we consider are finite dimensional vector spaces over $\mathbb{Q}$, unless otherwise specified. Therefore, we use the notation {\bf FI-Mod} for the category of FI-modules over $\mathbb{Q}$.

%One of the main advantages of 
In the category {\bf FI-Mod}$_R$ we can define analogous concepts of the basic definitions coming from module theory. 

\begin{definition}
An FI-module $V$ over $R$  is said to be {\it finitely generated in degree $\leq m$} if there exist $v_1,\ldots,v_s$,  with each $v_i\in V_{n_i}$ and $n_i\leq m$, such that $V$ is the minimal sub-FI-module of $V$ containing $v_1,\ldots,v_s$. We write $V=\text{span}(v_1,\ldots,v_s)$. An FI\#-module  over $R$ is {\it finitely generated in degree $\leq m$} if the underlying FI-module is finitely generated in degree $\leq m$. A graded FI-module  $V$ over $R$ is said to be of {\it finite type} if each FI-module $V^i$ is finitely generated.
\end{definition}

Finitely generated FI-modules have strong closure properties that allow our arguments below. In particular, % that turn out to be very useful. 
extensions and quotients of finitely generated FI-modules are still finitely generated (\cite[Proposition 2.17]{3AMIGOS}). 
Furthermore they satisfy a ``Noetherian property'' in the following sense: If $V$ is a finitely generated FI-module over a Noetherian ring $R$, and $W$ is a sub-FI-module of $V$ , then $W$ is finitely generated. This is \cite[Theorem 1.1]{4AMIGOS}. It was first proved for FI-modules over a field of characteristic zero in \cite[Theorem 2.60]{3AMIGOS}.

%\subsection{Finitely genereted FI-modules}\label{FINITE}
% (we write $V=\text{span}(v_1,\ldots,v_k)$). 
%\subsection{Finitely generated FI\#-modules}\label{FINITE}
In some of our examples below,  the FI-module $V$ over $R$ has actually the extra structure of an FI\#-module over $R$. In that case more is true: if $V$ is finitely generated in degree $\leq m$, then any sub-FI\#-module is finitely generated in degree $\leq m$ (follows from \cite[Corollaries 2.25 \& 2.26]{3AMIGOS}). \medskip

\noindent {\bf Notation (The FI-modules $M(W)$): } Let $m\in\mathbb{N}$ and consider a fixed $S_m$-representation $W$ over a field $k$ or $k=\mathbb{Z}$. The FI-module $M(W)$ is defined as follows:

\begin{displaymath}
   M(W)_n:= \left\{
     \begin{array}{lr}
      0, &  \text{if } n<m\\
       \Ind_{S_{m}\times S_{n-m}}^{S_n}W\boxtimes k, & \text{if } n\geq m.
     \end{array}
   \right.
\end{displaymath} 

In particular, when $W=k[S_m]$ we will denote the FI-module $M(W)$ by $M(m)$. These FI-modules were introduced in \cite[Section 2.1]{3AMIGOS}. By definition, they are finitely generated in degree $m$. Moreover they have the structure of an FI\#-module and have surjectivity degree at most $m$. %(actually $M(\lambda)$ has surjectivity degree $\lambda_1$).
The FI\#-modules $M(W)$ are ``building blocks'' for general FI\#-modules (\cite[Theorem 2.24 and Corollary 2.26]{3AMIGOS}).

\subsection{FI-modules over fields of characteristic zero}
 %In that sense the category {\bf FI-Mod} over such $k$ behaves like the category of modules over a Noetherian ring. 
%In the remaining subsections, $k$ is always a field of characteristic zero, unless otherwise indicated.
%\begin{theo}[Noetherian property]\label{NOETH}
%Any sub-FI-module of a finitely generated FI-module is finitely generated.
%\end{theo}
%As in the case of the category of modules over a Noetherian ring, 

\noindent {\bf Notation (Representations of $S_n$ in characteristic zero): }
The irreducible representations of $S_n$ over a field of characteristic zero  $k$ are classified by partitions $\lambda$ of $n$. By a partition of $n$ we mean $\lambda=(\lambda_1\geq\cdots\geq\lambda_l >0)$ where $l\in\mathbb{Z}$ and $\lambda_1+\cdots+\lambda_l=n$.  We will write $|\lambda|=n$. The corresponding irreducible $S_n$-representation will be denoted by $V_{\lambda}$. Every  $V_{\lambda}$ is defined over $\mathbb{Q}$ and any $S_n$-representation decomposes over $\mathbb{Q}$ into a direct sum of irreducibles (\cite{FULTON_HARRIS} is a standard reference). The decomposition of an $S_n$-representation over any such field $k$ does not depend on $k$.

If $\lambda$ is any  partition of $m$, i.e. $|\lambda|=m$, then for any $n\geq |\lambda|+ \lambda_1$ the \textit{padded partition} $\lambda[n]$ of $n$ is given by $\lambda[n]=(n-|\lambda|,\lambda_1,\cdots,\lambda_l)$. Keeping the notation from \cite{CHURCH_FARB}  we set $V(\lambda)_n=V_{\lambda[n]}$  for any $n\geq |\lambda|+\lambda_1$. Every irreducible $S_n$-representation is of the form $V(\lambda)_n$ for a unique partition $\lambda$.  
For a given partition $|\lambda|=m$, we use $M(\lambda)$ to denote the FI-module $M(V_{\lambda})$.
We define the {\it length} of an irreducible representation of $S_n$ to be the
number of parts in the corresponding partition of $n$. The trivial representation has length $1$, and the alternating representation has length $n$. We define the {\it length $\ell(V )$} of a finite dimensional representation $V$
of $S_n$ to be the maximum of the lengths of the irreducible constituents. Notice that $\ell(V_\lambda)\leq|\lambda|$.
\bigskip

The proof of the ``Noetherian property'' does not give an upper bound on the degree in which a given subobject is generated. To deal with this for FI-modules over fields of characteristic zero the notion of {\it weight of an FI-module} was introduced in \cite[Section 2.5]{3AMIGOS}.

In this subsection let $k$ be a field of characteristic zero.

\begin{definition} Let $V$ be an FI-module over $k$.
We say that $V$ has {\it weight $\leq d$}  if for every $n\geq 0$ and every irreducible constituent $V(\lambda)_n$ we have $|\lambda|\leq d$.
\end{definition}

Notice that the weight of an FI-module is closed under subquotients and extensions.

%\begin{definition} The collection of  $FI$-modules over $k$ {\it of weight $\leq d$} is the minimal collection which contains all FI-modules generated in degree $\leq d$ and is closed under subquotients and extensions. An FI-module $V$ is of {\it finite weight} if it is of weight $\leq d$ for some $d\geq 0$. We denote by weight$(V)$ the minimum of such $d$ (as pointed out in \cite[Remark 2.55]{3AMIGOS} this is an alternate definition to  \cite[Definition 2.50]{3AMIGOS}).
%\end{definition}

The subgroup of $S_n$ that permutes $\{a+1,\ldots,n\}$ and acts trivially on $\{1,2,\ldots,a\}$ is denoted by $S_{n-a}$. The coinvariant quotient $(V_n)_{S_{n-a}}$ is the $S_a$-module $V_n\otimes_{k[S_{n-a}]}k$, i.e. the largest quotient of $V_n$ on which $S_{n-a}$ acts trivially.

The following provides a notion of stabilization and range of stabilization for an FI-module (this is just a rephrasing of \cite[Definitions 2.34 \& 2.35]{3AMIGOS}).

\begin{definition}
Let $V$ be an FI-module over $k$. If for every $a\geq 0$ and $n\geq N+a$ the map of coinvariants
\begin{equation}\label{MAPI}
(V_n)_{S_{n-a}}\rightarrow (V_{n+1})_{S_{n-a}}
\end{equation}
induced by the standard inclusion $I_n:\{1,\ldots n\}\hookrightarrow\{1,\ldots, n, n+1\}$, is an  injection of $S_a$-modules,  we say that $V$ has {\it injectivity  degree $\leq N$}.  If the map (\ref{MAPI}) is surjective, we say that $V$ has {\it surjectivity degree} $\leq N$.  The FI-module $V$ has {\it stability type} $(M,N)$ if it has injectivity degree $M$ and surjectivity degree $N$. When $V$ is an $FI\#$-module, the identity on $V_n$ factors through $I_n$, hence the injectivity degree is always $0$.

If the FI-module $V$ has stability type $(M,N)$, then we say that the {\it stability degree} of $V$  is given by at most $\max(M,N)$. 
\end{definition}

\section{A spectral sequence argument}\label{SS}
In this section we present the general spectral sequence argument that will give us finite generation and specific bounds in Theorems \ref{DIM2} and \ref{LERAYSERRE}. We basically apply the idea used in the proof of \cite[Theorem 4.2]{3AMIGOS} to a more general context. \bigskip

\noindent {\bf Setting:} Suppose that we have a first quadrant spectral sequence of FI-modules $E^{p,q}_*$ over $\mathbb{Q}$ converging to a graded FI-module $H^*(E)$ over $\mathbb{Q}$. 
Let $\alpha$  and $\beta$ be two non-negative constants such that $2\alpha\leq \beta$. In what follows, we assume that for any $p,q\geq0$ the FI-module  $E_2^{p,q}$ is finitely generated with  injectivity degree at most $\beta q$ and surjectivity degree at most $\alpha p+\beta q$.

In our applications below, $E^{p,q}_*$ is either a Leray, Leray--Serre or Hochschild--Serre spectral sequence.
\begin{lemma}\label{Er}  For any $p,q\geq0$ and $r\geq 3$,  the FI-module $ E_r^{p,q}$ is finitely generated with injectivity degree at most $\alpha p+\beta q+(\beta-\alpha)r+(\alpha-2\beta)$ and surjectivity degree at most $\alpha p+\beta q$.
\end{lemma}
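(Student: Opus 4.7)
The plan is to argue by induction on $r$, using at each step the standard presentation of successive pages as subquotients,
$$E_{r+1}^{p,q} \;=\; \ker\!\bigl(d_r: E_r^{p,q} \to E_r^{p+r,q-r+1}\bigr) \,\Big/\, \operatorname{im}\!\bigl(d_r: E_r^{p-r,q+r-1} \to E_r^{p,q}\bigr).$$
The base case is the hypothesis on the $E_2$-page; the case $r=3$ in the statement will be obtained by running the inductive step once, after which the general $r$ follows from the same step applied repeatedly. Finite generation propagates automatically at every page: by the Noetherian property of FI-modules over $\mathbb{Q}$ (\cite[Theorem 2.60]{3AMIGOS}, \cite[Theorem 1.1]{4AMIGOS}) any sub-FI-module of a finitely generated one is finitely generated, and by \cite[Proposition 2.17]{3AMIGOS} so are quotients, so finite generation of $E_2^{p,q}$ pushes forward to every $E_r^{p,q}$.

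For the stability degrees, I would invoke the inequalities from \cite[Section 2.4 and Proposition 2.45]{3AMIGOS} that govern how injectivity and surjectivity degrees interact with kernels, images, and cokernels. Concretely, for a map $f:V\to W$ of FI-modules,
$$\operatorname{surj}(\ker f) \leq \max\bigl(\operatorname{surj}(V),\operatorname{inj}(W)\bigr), \qquad \operatorname{inj}(\ker f) \leq \operatorname{inj}(V),$$
and for the quotient by a sub-FI-module $U\subseteq V$,
$$\operatorname{inj}(V/U) \leq \max\bigl(\operatorname{inj}(V),\operatorname{surj}(U)\bigr), \qquad \operatorname{surj}(V/U) \leq \operatorname{surj}(V).$$
Applying these to $E_{r+1}^{p,q}$ and substituting the inductive bounds at the adjacent positions $(p+r,q-r+1)$ and $(p-r,q+r-1)$ should yield the claimed formulas for $E_{r+1}^{p,q}$ by a direct calculation.

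The main obstacle is the bookkeeping on the injectivity side, and this is where the hypothesis $2\alpha\leq\beta$ enters crucially. Evaluating the inductive injectivity bound at the shifted position $(p+r,q-r+1)$ produces a constant $\alpha-\beta\leq 0$, so the contribution from $\ker d_r$ stays within $\alpha p+\beta q+(\beta-\alpha)r+(\alpha-2\beta)$; evaluating the inductive surjectivity bound at $(p-r,q+r-1)$ produces an increment of $(\beta-\alpha)$ compared to the base surjectivity $\alpha p+\beta q$, which is precisely the per-page growth matching the linear coefficient $(\beta-\alpha)r$ in the claim. For the surjectivity side, both competing maxima $\operatorname{surj}(E_r^{p,q})$ and $\operatorname{inj}(E_r^{p+r,q-r+1})$ are bounded by $\alpha p+\beta q$ again thanks to $2\alpha\leq\beta$, so $\operatorname{surj}(E_{r+1}^{p,q})\leq \alpha p+\beta q$ persists throughout the induction. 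The constant offset $(\alpha-2\beta)$ in the injectivity bound then appears simply as the calibration that makes the initialization at $r=3$ match the hypothesized $E_2$-bounds.
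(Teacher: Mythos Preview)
Your proposal is correct and follows essentially the same route as the paper: induction on $r$, passing from $E_r$ to $E_{r+1}$ via the three-term complex and controlling injectivity/surjectivity degrees through \cite[Proposition~2.45]{3AMIGOS} (which packages exactly the four kernel/quotient inequalities you list). The paper makes the base case $r=3$ explicit---computing it directly from the $E_2$ hypotheses rather than from the formula, since the formula at $r=2$ would give injectivity $\alpha p+\beta q-\alpha$, which is \emph{stronger} than the assumed $\beta q$ when $p=0$---and this is where the full strength of $2\alpha\leq\beta$ enters (to ensure $\alpha p+\beta q+\beta-2\alpha\geq\beta q$); your final sentence about ``calibration'' gestures at this, but in a written proof you should carry out that base step separately rather than fold it into the generic inductive step.
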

\begin{proof} Finite generation  of an FI-module is closed under subquotients. To verify the stated stability type we proceed by induction on $r\geq 3$. The base of induction is the case $r=3$. To compute $E_{3}^{p,q}$ we consider the complex of FI-modules
 \begin{equation*}
\xymatrix{
E_2^{p-2,q+1} \ar[r]& E_2^{p,q} \ar[r]& E_2^{p+2,q-1}},
\end{equation*}
 where the left map is the differential $d_2^{p-2, q+1}$ and the right map is $d_2^{p,q}$.
By hypothesis the left hand side term in the previous complex has surjectivity degree at most $\alpha(p-2)+\beta(q+1)=\alpha p+\beta q +(\beta-2\alpha)$. The middle term has stability type at most  $\big(\beta q ,\alpha p+\beta q\big)$  and  the right hand side term has injectivity degree at most $\beta(q-1)$. Hence, by applying \cite[Proposition 2.45]{3AMIGOS} to the complex of FI-modules above, we obtain that the quotient FI-module $$E_{3}^{p,q}\approx \ker d_2^{p,q}/\im d_2^{p-2, q+1}$$ has injectivity degree at most 
$$\max\big(\alpha p+\beta q +(\beta-2\alpha),\beta q\big)= \alpha p+\beta q +(\beta-2\alpha)=\alpha p+\beta q+(\beta-\alpha)(3)+(\alpha-2\beta)$$ since $2\alpha\leq\beta$, and surjectivity degree at most $$\max\big(\alpha p+\beta q,\beta q-\beta\big)=\alpha p+\beta q$$ since $\alpha,\beta\geq 0$. 

Now suppose that the statement is true for $E_r^{p,q}$. To compute $E_{r+1}^{p,q}$ we consider the complex of FI-modules
 \begin{equation*}
\xymatrix{
E_r^{p-r,q+r-1} \ar[r]& E_r^{p,q} \ar[r]& E_r^{p+r,q-r+1}},
\end{equation*}
 where the left map is the differential $d_r^{p-r, q+r-1}$ and the right map $d_r^{p,q}$.
By induction, the left hand side term in the previous complex has surjectivity degree at most $\alpha p+\beta q+(\beta-\alpha)(r+1)+(\alpha-2\beta)$. The middle term has stability type at most  $$\big(\alpha p+\beta q+(\beta-\alpha)r+(\alpha-2\beta),\alpha p+\beta q\big).$$ Finally the right hand side term has injectivity degree at most $\alpha p+\beta q+\alpha-\beta$. By applying again \cite[Proposition 2.45]{3AMIGOS} we get the desired stability type for the quotient $$E_{r+1}^{p,q}\approx \ker d_r^{p,q}/\im d_r^{p-r, q+r-1}.$$
\end{proof}

For a given $i\geq 0$ and $0\leq p\leq i$, we have that  $E_{\infty}^{p,i-p}=E_{i+2}^{p,i-p}$. From Lemma \ref{Er} we get the immediate corollary. %, which is a subquotient of $E_{2}^{p,i-p}(n)$. Since the degree of an FI-module can only decrease when considering subquotients, then the previous Lemmas imply the following:
\begin{cor}\label{Einfinity} %For any $i\geq 0$ and $0\leq p\leq i$, the finitely generated 
The FI-module $E_{\infty}^{p,i-p}$%=E_{i+2}^{p,i-p}$ 
has injectivity degree  at most $$\alpha p+\beta (i-p)+(\beta-\alpha)(i+2)+(\alpha-2\beta)=(2\beta-\alpha) i +(\alpha-\beta) p-\alpha\leq (2\beta-\alpha) i-\alpha$$ and surjectivity degree at most $$\alpha p+\beta (i-p)=\beta i+(\alpha-\beta) p\leq \beta i.$$  
\end{cor}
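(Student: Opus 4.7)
The plan is to simply read off the corollary from Lemma \ref{Er} by specializing the page index $r$. Since the spectral sequence is first quadrant, for the total degree $p + q = i$ every incoming differential $d_r^{p-r,q+r-1}$ and outgoing differential $d_r^{p,q}$ is zero once $r \geq i+2$ (for such $r$, either the source or target sits outside the first quadrant). Therefore $E_{\infty}^{p,i-p} = E_{i+2}^{p,i-p}$, which is the identification stated before the corollary.

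With that identification, the proof is a direct substitution. I would plug $r = i+2$ and $q = i-p$ into the bounds from Lemma \ref{Er}: the injectivity degree is at most
\[
\alpha p + \beta(i-p) + (\beta - \alpha)(i+2) + (\alpha - 2\beta),
\]
and the surjectivity degree is at most $\alpha p + \beta(i-p)$. Expanding the first expression and collecting terms gives $(2\beta - \alpha)i + (\alpha - \beta)p - \alpha$, and the second rearranges to $\beta i + (\alpha - \beta)p$.

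The final step is to bound these linear functions of $p$ uniformly over $0 \leq p \leq i$. Here I use the standing hypothesis $2\alpha \leq \beta$, which implies $\alpha - \beta \leq -\alpha \leq 0$. Hence $(\alpha - \beta)p \leq 0$ for $p \geq 0$, so both bounds are maximized at $p = 0$, yielding the uniform estimates $(2\beta - \alpha)i - \alpha$ and $\beta i$ respectively.

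There is no real obstacle: the content is entirely in Lemma \ref{Er} and the first-quadrant convergence of the spectral sequence. The only point worth double-checking is the verification that $r = i+2$ really kills all differentials on the antidiagonal $p + q = i$ (both the outgoing one, needing $r \geq q + 2$, and the incoming one, needing $r \geq p + 1$), which holds since $q \leq i$ and $p \leq i$.
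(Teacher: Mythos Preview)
Your proof is correct and follows exactly the approach in the paper: the paper simply observes that $E_{\infty}^{p,i-p}=E_{i+2}^{p,i-p}$ by the first-quadrant condition and then reads off the bounds from Lemma~\ref{Er} with $r=i+2$, which is precisely what you have written out in detail (including the check that $\alpha-\beta\leq 0$ to maximize at $p=0$).
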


As assumed at the beginning of this section, the spectral sequence $E_*^{p,q}$ converges to a graded FI-module $H^*(E)$. From Lemma \ref{Er} and Corollary \ref{Einfinity}  we can conclude the following about the stability type of each FI-module $H^i(E)$.
\begin{theo}\label{MAINSPEC} 
Suppose that we have a first quadrant spectral sequence of FI-modules $E^{p,q}_*$ over a Noetherian ring $R$ converging to a graded FI-module $H^*(E;R)$ over $R$. If the $FI$-module  $E_2^{p,q}$ is finitely generated, then the FI-module $H^i(E;R)$ is  finitely generated, for any $i\geq 0$.

Furthermore, asume that $R=\mathbb{Q}$ and that for any $p,q\geq0$ the FI-module  $E_2^{p,q}$ has injectivity degree at most $\beta q$ and surjectivity degree at most $\alpha p+\beta q$ , where $\alpha, \beta\geq 0$ such that $2\alpha\leq \beta$. Then, the FI-module $H^i(E)=H^i(E;\mathbb{Q})$ is  finitely generated with  stability type at most $((2\beta-\alpha)i-\alpha, \beta i)$.
\end{theo}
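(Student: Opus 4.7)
The plan is to deduce both parts of the theorem from what has already been established in Lemma \ref{Er} and Corollary \ref{Einfinity}, together with the standard filtration associated to a convergent spectral sequence and closure properties of finite generation and stability type.

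First, for the finite generation statement over a Noetherian ring $R$, I would argue as follows. The convergence of the first quadrant spectral sequence gives, for each $i \geq 0$, a finite filtration
\begin{equation*}
0 = F^{i+1} H^i(E;R) \subseteq F^i H^i(E;R) \subseteq \cdots \subseteq F^0 H^i(E;R) = H^i(E;R)
\end{equation*}
with successive quotients $F^p H^i(E;R)/F^{p+1} H^i(E;R) \cong E_\infty^{p,i-p}$ for $0 \leq p \leq i$. Each $E_\infty^{p,i-p}$ is a subquotient of $E_2^{p,i-p}$, which is finitely generated by hypothesis. By the Noetherian property \cite[Theorem 1.1]{4AMIGOS}, subobjects of finitely generated FI-modules over a Noetherian ring are finitely generated, and quotients are automatically finitely generated, so each $E_\infty^{p,i-p}$ is finitely generated. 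Since finite generation is closed under extensions (\cite[Proposition 2.17]{3AMIGOS}), an induction on the length of the filtration yields finite generation of $H^i(E;R)$.

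For the quantitative statement over $\mathbb{Q}$, I would combine this filtration with Corollary \ref{Einfinity}, which gives that each associated graded piece $E_\infty^{p,i-p}$ has injectivity degree at most $(2\beta-\alpha)i - \alpha$ and surjectivity degree at most $\beta i$, uniformly in $p$. The task is then to show that an extension of FI-modules of stability type $\leq (M,N)$ again has stability type $\leq (M,N)$. This is where I would invoke \cite[Proposition 2.45]{3AMIGOS} (the same tool used in the proof of Lemma \ref{Er}): given a short exact sequence $0 \to U \to V \to W \to 0$ of FI-modules, the proposition controls the stability type of $V$ in terms of those of $U$ and $W$. Running an induction upward through the filtration $F^{i+1} \subseteq F^{i} \subseteq \cdots \subseteq F^0 = H^i(E;\mathbb{Q})$, where each step is an extension of $F^{p+1}H^i(E;\mathbb{Q})$ by $E_\infty^{p,i-p}$, propagates the common bound and yields the claimed stability type $\bigl((2\beta-\alpha)i-\alpha,\ \beta i\bigr)$ for $H^i(E;\mathbb{Q})$.

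The main obstacle I anticipate is checking that the extension step preserves the uniform bound without loss: one needs to verify that \cite[Proposition 2.45]{3AMIGOS} indeed allows one to take the maximum of the injectivity and surjectivity degrees of the sub- and quotient-module without an extra shift, so that the inductive bound does not drift as $p$ decreases from $i$ to $0$. This is essentially bookkeeping, but it is the place where the hypothesis $2\alpha \leq \beta$ from the earlier lemma gets used to guarantee that the bound on $E_\infty^{p,i-p}$ is monotone in $p$ and realizes its maximum at $p=0$ (surjectivity) and $p=i$ (injectivity), so that the common extension bound $\bigl((2\beta-\alpha)i-\alpha, \beta i\bigr)$ is valid.
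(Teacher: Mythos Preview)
Your approach matches the paper's almost exactly: use the filtration coming from convergence, note that each $E_\infty^{p,i-p}$ is a subquotient of $E_2^{p,i-p}$ (hence finitely generated by the Noetherian property), and then pass the bounds from Corollary \ref{Einfinity} up through the filtration.

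The one point to correct is the citation for the extension step. Proposition 2.45 of \cite{3AMIGOS} computes the stability type of the \emph{homology} of a complex from data on the terms; it does not go the other way and bound the stability type of the middle term of a short exact sequence from those of the sub- and quotient-module. The paper instead invokes \cite[Proposition 2.46]{3AMIGOS}, which is precisely the statement that a filtered FI-module inherits injectivity and surjectivity degree bounds from its associated graded pieces. That proposition handles the whole filtration at once and removes the ``bookkeeping'' worry you raise in your last paragraph: no drift occurs, and the uniform bounds from Corollary \ref{Einfinity} pass directly to $H^i(E;\mathbb{Q})$. With that substitution your argument is complete and coincides with the paper's.
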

\begin{proof}
The first statement follows from the fact that finite generation of an FI-module over a Noetherian ring $R$ is closed under subquotients (\cite[Theorem 1.1]{4AMIGOS}). 

For each $i\geq 0$, there is a natural filtration of $H^i(E)$ by FI-modules  
\begin{equation}\label{FILTRATION}
0\subseteq F^i_i\subseteq F^i_{i-1}\subseteq \ldots\subseteq F^i_1\subseteq F^i_0=H^i(E),\end{equation}  where, for $0\leq p\leq i$, the successive quotients $F^i_p/F^i_{p+1}\approx E_{\infty}^{p,i-p}$. The second statement for the case $k=\mathbb{Q}$ follows from combining the bounds in Lemma \ref{Einfinity} with \cite[Proposition 2.46]{3AMIGOS}, which states injectivity and surjectivity degrees for filtrations of FI-modules satisfying the conditions above.  
\end{proof}\bigskip

%\noindent{\bf Remark:} In the examples below we know the weight of each FI-module $E_2^{p,q}$ and we use it to obtain specific bounds for the weight of the FI-modules $H^i(E)$.

\subsection{Spectral sequences and FI\#-modules}\label{SSFIsharp}
We conclude this section with an argument that allows us to take advantage of the extra structure of finitely generated FI\#-modules to get information about the cases where $k$ is a field of arbitrary characteristic or $\mathbb{Z}$. This follows essentially the proof of \cite[Theorem 4.7]{3AMIGOS}.\bigskip

\noindent {\bf Setting:} Suppose that we have a first quadrant spectral sequence of FI-modules $E^{p,q}_*$ over $k$ converging to a graded FI\#-module $H^*(E;k)$ over $k$.
Let $\alpha$  and $\beta$ be two non-negative constants such that $\alpha\leq \beta$. Assume that for any $p,q\geq0$ each term $E_2^{p,q}$ is an FI\#-module which is finitely generated in degree $\leq \alpha p+\beta q$.

\begin{theo}\label{SPECFIsharp} Let $k$ be any field or $\mathbb{Z}$. For any $i\geq 0$ the FI\#-module $H^i(E;k)$ is  finitely generated in degree $\leq\beta i$.\end{theo}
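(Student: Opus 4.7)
The plan is to run the filtration argument of Theorem \ref{MAINSPEC} but in the richer category of FI\#-modules, following the strategy of the proof of \cite[Theorem 4.7]{3AMIGOS}. The key numerical observation is that, because $\alpha \leq \beta$ and $0 \leq p \leq i$, one has $\alpha p + \beta(i-p) \leq \beta i$, so the generation bound on $E_2^{p,i-p}$ is already the bound we want for $H^i(E;k)$.

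First I would set up the filtration exactly as in Theorem \ref{MAINSPEC}: the convergence of the spectral sequence supplies a finite filtration of FI-modules
\[
0 \subseteq F^i_i \subseteq F^i_{i-1} \subseteq \cdots \subseteq F^i_0 = H^i(E;k),
\]
whose successive quotients are $F^i_p/F^i_{p+1} \cong E_\infty^{p,i-p}$ for $0 \leq p \leq i$.

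Next, I would show that each $E_\infty^{p,i-p}$ inherits an FI\#-module structure from $E_2^{p,i-p}$ and is finitely generated in degree $\leq \alpha p + \beta(i-p) \leq \beta i$. By construction $E_\infty^{p,i-p}$ is an iterated subquotient of $E_2^{p,i-p}$ through the differentials of the spectral sequence. In the spectral sequences to which this theorem is to be applied (FI\#-fibrations and FI\#-group extensions, where the ambient FI\#-structure is manifest at every page), those differentials are morphisms of FI\#-modules, so each page $E_r^{p,q}$ is an FI\#-module and the FI\#-subquotient property of Section \ref{FI} (from \cite[Corollaries 2.25 \& 2.26]{3AMIGOS}) preserves the degree bound $\alpha p + \beta q$ from $E_2$ all the way to $E_\infty$.

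Finally, an induction along the filtration using the closure of finite generation in degree $\leq \beta i$ under extensions (\cite[Proposition 2.17]{3AMIGOS}) gives that $H^i(E;k)$ is finitely generated as an FI-module in degree $\leq \beta i$; because it already carries an FI\#-structure and, by the definition in Section \ref{FI}, finite generation of an FI\#-module in degree $\leq m$ is a property of the underlying FI-module, this is exactly the conclusion of the theorem. The main obstacle is the second step: one needs to verify that, in each concrete spectral sequence where this machinery is applied, the differentials really are FI\#-morphisms so that the $E_\infty$-pages are FI\#-subquotients of $E_2$ rather than merely FI-subquotients, for which the degree bound $\alpha p + \beta q$ need not persist.
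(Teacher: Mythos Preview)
Your argument has a genuine gap precisely where you flag it: the hypotheses of the theorem say only that the spectral sequence is one of FI-modules, that the $E_2$-terms happen to be FI\#-modules, and that the abutment $H^*(E;k)$ is an FI\#-module. Nothing guarantees that the differentials $d_r$ are FI\#-morphisms, so $E_\infty^{p,q}$ is a priori only an FI-subquotient of $E_2^{p,q}$, and the degree bound from \cite[Corollaries 2.25 \& 2.26]{3AMIGOS} does not apply. Deferring this to a case-by-case check in applications does not prove the theorem as stated; the statement is meant to be a black box that feeds only on the listed hypotheses.

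The paper's proof sidesteps this entirely by passing through dimensions (or ranks, when $k=\mathbb{Z}$) rather than through FI\#-structure on the intermediate pages. By \cite[Corollary 2.27]{3AMIGOS}, the hypothesis on $E_2^{p,q}$ gives $\dim_k E_2^{p,q}(n)=O(n^{\alpha p+\beta q})$. Dimension of a $k$-vector space is non-increasing under subquotients, so $\dim_k E_\infty^{p,q}(n)=O(n^{\alpha p+\beta q})$ with no structural assumption on the differentials. Summing over the filtration and using $\alpha p+\beta(i-p)\leq\beta i$ yields $\dim_k H^i(E;k)(n)=O(n^{\beta i})$. Now, and only now, one uses that $H^i(E;k)$ itself is an FI\#-module: the reverse implication of \cite[Corollary 2.27]{3AMIGOS} converts the dimension bound back into finite generation in degree $\leq\beta i$. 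The FI\#-structure is invoked exactly twice, at the input ($E_2$) and at the output ($H^i$), never on the pages in between.
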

\begin{proof}
Suppose first that $k$ is a field. We have that $E_2^{p,q}$ is an FI\#-module which is finitely generated in degree $\leq \alpha p+\beta q$.  \cite[Corollary 2.27]{3AMIGOS} allows to relate this upper bound on the degree of generation with the dimension of the $k$-vector space $E_2^{p,q}(n)$ and conclude that $\dim_k E_2^{p,q}(n)=O(n^{\alpha p+\beta q})$. Since $E_\infty^{p,q}$ is a subquotient of $E_2^{p,q}$ and $k$ is a field, then $\dim_k E_\infty^{p,q}(n)=O(n^{\alpha p+\beta q})$. Finally for each $i\geq 0$, from the filtration (\ref{FILTRATION}) of $H^i(E;k)$, we have that $\dim_k H^i(E;k)=O(n^{\beta i})$ (since $\alpha p+\beta (i-p)\leq \beta i$ for any $0\leq p\leq i$). Hence, by applying again \cite[Corollary 2.27]{3AMIGOS} we get the desired implication.
The case when $k=\mathbb{Z}$ can be treated similarly because the rank of a $\mathbb{Z}$-module is non-increasing when passing to submodules.
\end{proof}\bigskip

\section{Sequences of cohomology groups as FI-modules (part I)}\label{EG1}

In this section we revisit two examples of FI-modules that are key ingredients to understand our main examples in Section \ref{EG2}.

\subsection{The  FI-module $H^i(M^{\bullet};k)$}

Given $M$ a topological space, consider the co-FI-space $M^\bullet$. 
It is the functor that assigns $\mathbf{n}\mapsto M^n:=\underbrace{M\times\cdots\times M}_{n}$. Morphisms are defined as follows: 

If $f\in\Hom_{\text{FI}}(\mathbf{m},\mathbf {n})$, then   $f^*:M^n\rightarrow M^m$ is given by $f^*(x_1,\ldots,x_n)=(x_{f(1)},\ldots, x_{f(m)})$.  For each $i\geq 0$ we compose with the contravariant functor $H^i(\_\_;k)$ to get an FI-module over a field $k$.

\begin{prop}\label{PROD} Let $k$ be a field and $M$ be a connected CW-complex with   $\dim_{k}\big(H^i(M;k)\big)<\infty$ for any $i\geq 0$. Then   $H^i(M^{\bullet};k)$ is an FI\#-module finitely generated over $k$. If $k=\mathbb{Q}$ then it has weight $\leq i$ and has stability type at most $(0,i)$.
\end{prop}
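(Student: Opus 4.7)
The plan is to realize $H^i(M^\bullet;k)$ explicitly as a finite direct sum of FI\#-modules of the form $M(W)$. Since $M$ is path-connected I first pick a basepoint $*\in M$ and extend the co-FI-space $M^\bullet$ to a co-FI\#-space by letting an FI\#-morphism $(A,B,\psi)\colon \mathbf{m}\to\mathbf{n}$ act via the map $M^n\to M^m$ sending $(x_1,\dots,x_n)$ to the tuple whose $j$-th coordinate is $x_{\psi(j)}$ for $j\in A$ and $*$ otherwise. Although this construction depends on $*$, at the cohomology level it does not, because the inclusion $*\hookrightarrow M$ induces the canonical augmentation $H^*(M;k)\twoheadrightarrow H^0(M;k)=k$; hence $H^i(M^\bullet;k)$ acquires a well-defined FI\#-module structure.

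Next I would apply Künneth to write $H^*(M^n;k)\cong V^{\otimes n}$ with $V:=H^*(M;k)=k\oplus V^{>0}$, and then expand
\[
V^{\otimes n}=\bigoplus_{S\subseteq[n]}(V^{>0})^{\otimes S}.
\]
Unpacking the FI\# structure above identifies this with
\[
H^*(M^\bullet;k)\cong\bigoplus_{s\geq 0}M(W_s),\qquad W_s:=(V^{>0})^{\otimes s},
\]
where $W_s$ carries the natural $S_s$-action permuting tensor factors. Restricting to cohomological degree $i$ yields
\[
H^i(M^\bullet;k)=\bigoplus_{s=0}^{i}M\bigl((W_s)^i\bigr),
\]
a finite sum since every factor of $V^{>0}$ has degree $\geq 1$, forcing $(W_s)^i=0$ for $s>i$. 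Each $(W_s)^i$ is a finite-dimensional $S_s$-representation by hypothesis, so every summand $M((W_s)^i)$ is a finitely generated FI\#-module in degree $\leq s\leq i$, and hence so is $H^i(M^\bullet;k)$.

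Finally, the promised bounds in the $k=\mathbb{Q}$ case will come summand-by-summand from the standard properties of $M(W)$ recalled in Section \ref{FI}: for $W$ an $S_m$-representation, $M(W)$ has weight at most $m$ (via Pieri's rule for its irreducible constituents), injectivity degree $0$ (automatic for any FI\#-module), and surjectivity degree at most $m$. Applying these with $s\leq i$ to each summand will give weight $\leq i$ and stability type $\leq(0,i)$ for $H^i(M^\bullet;\mathbb{Q})$.

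I expect the main subtlety to lie in the compatibility check between the basepoint-dependent spatial extension of $M^\bullet$ to FI\# and the purely algebraic $\bigoplus_s M(W_s)$ presentation. Once one sees that the unit $k\hookrightarrow V$ and the augmentation $V\twoheadrightarrow k$ implement the "insert or forget a coordinate" operations encoded by FI\#, every remaining step is essentially bookkeeping with the formalism of Section \ref{FI}.
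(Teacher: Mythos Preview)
Your proposal is correct and follows essentially the same approach as the paper: both arguments rest on K\"unneth to identify $H^*(M^\bullet;k)$ with the graded FI\#-module $H^*(M;k)^{\otimes\bullet}$, and then exhibit the degree-$i$ piece as a finite direct sum of FI\#-modules $M(W_s)$ with $s\leq i$, from which finite generation and (over $\mathbb{Q}$) the weight and stability bounds follow from the standard properties of $M(W)$. The only difference is that the paper outsources the decomposition step to citations (\cite[Section~4 and Definition~2.71]{3AMIGOS} and \cite[Proposition~6.5]{JIM}), whereas you write it out explicitly via the splitting $V=k\oplus V^{>0}$; one small point you should make explicit is that the $S_s$-action on $W_s=(V^{>0})^{\otimes s}$ must incorporate the Koszul sign rule (the paper flags this as ``apart from signs''), though this does not affect the conclusions.
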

\begin{proof}
This is a consequence of the K\"unneth formula. As pointed out in \cite[Section 4]{3AMIGOS} the graded FI-module $H^*(M^\bullet;k)$ coincides, apart from signs, with the graded FI-module $H^*(M;k)^{\otimes\bullet}$  (see \cite[Definition 2.71]{3AMIGOS}) which is a finitely generated FI$\#$-module since $M$ is connected and $\dim_{k}\big(H^i(M;k)\big)<\infty$. When $k=\mathbb{Q}$, it can actually be shown that $H^i(M^{\bullet};\mathbb{Q})$ is a direct sum of FI$\#$-modules of the form $M(W_j)$, where $W_j$ is some $S_j$-representation and each summand satisfies that $j\leq i$ (see for example \cite[Proposition 6.5]{JIM}). Then the weight and the stability type claimed in Proposition \ref{PROD} follow.
\end{proof}\medskip

\noindent 

Similarly, if $G$ is a group, we can consider the co-FI-group $G^{\bullet}$.
%Notice that $G^{\bullet}$ is actually a co-FI$\#$-group. 
If $G$ is a group of type $FP_{\infty}$ (see for example \cite[Chapter VIII]{BROWN} for definition), then the CW-complex $M=K(G,1)$  satisfies the hypotheses in Proposition \ref{PROD} and it follows that the FI$\#$-module $H^i(G^{\bullet};k)=H^i(M^{\bullet};k)$ is finitely generated .%and Proposition \ref{PROD} applies as well.
 
\subsection{Cohomology of configuration spaces}

Let $k$ be any field and let $M$ be a connected, oriented manifold of dimension $d\geq 2$ and assume that $\dim_{k}\big(H^*(M);k)\big)<\infty$. % and let $\C_{\bullet}(M)$ be the functor that assigns {\bf n} $\mapsto \C_n(M)$ and with morphisms defined as in the case of $M^\bullet$. 
Since the inclusion $\C_n(M)\hookrightarrow M^n$ is $S_n$-equivariant, we get a corresponding map of co-FI-spaces $\C_{\bullet}(M)\rightarrow M^{\bullet}.$  We recall here how a spectral sequence argument can be used to obtain finiteness conditions for the FI-modules  $H^q\big(\C_{\bullet}(M);k\big)$.

Let us take, together for all $n$, the Leray spectral sequences of $\C_n(M)\hookrightarrow M^n$. The functoriality of the Leray spectral sequence implies that we have a spectral sequence of FI-modules $$E^{p,q}_*=E^{p,q}_*(\C_{\bullet}(M)\rightarrow M^{\bullet})$$ converging to the graded FI-module $H^*(\C_{\bullet}(M);k)$. 

Using this spectral sequence, finite type of this graded FI-module has been proved over any field $k$ in \cite[Proposition 4.1]{4AMIGOS}. For the case when $k=\mathbb{Q}$ and the dimension of M is $d\geq 3$, particular bounds for the stability degree have been obtained.

\begin{theo}[\cite{3AMIGOS}, Theorem 4.2 ]\label{DIM3}
Let $M$ be a connected, oriented manifold of dimension $d\geq 3$. For any $i\geq 0$, the FI-module $H^i(\C_{\bullet}(M);\mathbb{Q})$ has weight $\leq i$ and stability type at most $(i+2-d, i)$. 
\end{theo}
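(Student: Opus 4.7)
The plan is to realize $H^i(\C_\bullet(M);\mathbb{Q})$ as the abutment of a spectral sequence of FI-modules whose $E_2$-page has weight and stability type one can control directly, then use closure of these invariants under subquotients and extensions to transfer the bounds to $H^i$. The spectral sequence of choice is the Leray spectral sequence $E_*^{p,q}$ of the co-FI inclusion $\C_\bullet(M)\hookrightarrow M^\bullet$; functoriality in $n$ upgrades it to a first-quadrant spectral sequence of FI-modules over $\mathbb{Q}$ converging to the graded FI-module $H^*(\C_\bullet(M);\mathbb{Q})$.

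First I would identify the $E_2$-page using the Cohen--Taylor computation (see \cite{COHEN_TAYLOR}): for $M$ connected oriented of dimension $d$, the page $E_2$ is concentrated in rows $q=k(d-1)$ and takes the form
$$E_2^{p,k(d-1)} \cong H^p(M^\bullet;\mathbb{Q})\otimes W_k,$$
where $W_k$ is the degree-$k$ piece of the graded FI-algebra generated by Arnold classes $G_{ij}$ (one per ordered pair $i\neq j$) modulo the Arnold relations. Proposition \ref{PROD} gives that $H^p(M^\bullet;\mathbb{Q})$ is a finitely generated FI\#-module of weight $\leq p$ and stability type at most $(0,p)$. Each generator $G_{ij}$ is supported on two indices, so a product of $k$ of them involves at most $2k$ indices; hence $W_k$ is finitely generated of weight $\leq 2k$, with surjectivity degree also $\leq 2k$. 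Using subadditivity of weight and of stability degree under tensor products of FI-modules (as developed in \cite[Section 2]{3AMIGOS}), each $E_2^{p,k(d-1)}$ is finitely generated with weight $\leq p+2k$. The hypothesis $d\geq 3$ enters here crucially: $2k\leq k(d-1)$, so the weight is bounded by $p+k(d-1)$, i.e. the total degree.

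To pass from $E_2$ to $H^i$, I would run a spectral sequence argument analogous to Lemma \ref{Er} and Theorem \ref{MAINSPEC}, exploiting the sparsity of the $E_2$-page. Since $E_2^{p,q}$ vanishes unless $(d-1)\mid q$, a differential $d_r$ can hit a nonzero target only when $(d-1)\mid(r-1)$, so after $d_2$ the first possibly nonzero differential is $d_d$, then $d_{2d-1}$, and so on. Each $E_{r+1}$ is a subquotient of $E_r$, and since weight is closed under subquotients and extensions, every $E_\infty^{p,i-p}$ has weight $\leq i$. Combining this with the filtration (\ref{FILTRATION}) of $H^i$ whose successive quotients are the $E_\infty^{p,i-p}$ yields weight $\leq i$ on $H^i(\C_\bullet(M);\mathbb{Q})$. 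For the stability-degree bound $(i+2-d,i)$ one runs the same inductive argument on pages while tracking injectivity and surjectivity degrees explicitly through each application of \cite[Proposition 2.45]{3AMIGOS}; the $2-d$ shift in the injectivity degree reflects that only one row in every $d-1$ of $E_2$ is nonzero.

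The main obstacle is the bookkeeping required to land on the sharp injectivity bound $i+2-d$: a naive application of the general spectral sequence machinery with $\alpha=\beta=1$ would both violate the $2\alpha\leq\beta$ hypothesis of Theorem \ref{MAINSPEC} and produce a weaker bound. The crux is that, because only one of every $d-1$ rows of $E_2$ is nonzero, the effective contribution of the row index to the injectivity shift through the pages is correspondingly smaller than the generic estimate. The weight estimate, by contrast, is essentially routine once Proposition \ref{PROD} and the inequality $2k\leq k(d-1)$ for $d\geq 3$ are in hand.
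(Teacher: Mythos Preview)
The paper does not give its own proof of this theorem: it is stated as a citation of \cite[Theorem 4.2]{3AMIGOS}, and the paper then moves on to treat the $d=2$ case separately in Theorem \ref{DIM2}. So there is no in-paper proof to compare against directly.

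That said, your proposal is exactly the argument of \cite{3AMIGOS}, and it mirrors what the paper itself does for surfaces in Theorem \ref{DIM2}. One small point of phrasing: rather than writing $E_2^{p,k(d-1)}\cong H^p(M^\bullet)\otimes W_k$ and invoking subadditivity of weight under tensor products, the cleaner formulation (used in \cite[Section 3.3]{CHURCH} and cited in the proof of Theorem \ref{DIM2}) is that each $E_2^{p,q}$ decomposes as a direct sum of FI\#-modules $M(W_j)$ with $j\le p+2q/(d-1)$; this immediately gives stability type $(0,p+2q/(d-1))$ and weight $\le p+2q/(d-1)$, and for $d\ge 3$ one has $p+2q/(d-1)\le p+q$. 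With that input, Theorem \ref{MAINSPEC} applies with $\alpha=2/(d-1)$ and $\beta=1$ (note $2\alpha\le\beta$ holds precisely because $d\ge 3$, which is less delicate than you suggest), and you read off weight $\le i$ and the stability bound. Your identification of where the $d\ge 3$ hypothesis enters and of the sparsity of the $E_2$-page is on the mark.
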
 

\noindent We now focus in the case where $\Sigma$ is a connected, oriented surface ($d=2$).
%\noindent {\bf Remark:} In \cite[Theorem 1]{CHURCH} is proved that for any fixed $i\geq 0$ the sequence $\{H^i\big(\C_n(\Sigma);\mathbb{Q}\big)\}$ is uniformly  representation stable and monotone with stable range $n\geq 4i$. From Theorem \ref{FI-RS} it follows that we do have a finitely generated FI-module in degree $\leq 4i$. Moreover, if $\Sigma$ has non-empty boundary, then we get an $FI\#$-module and we know that the injectivity degree is $0$.\bigskip 
Following the approach in \cite[Section 4]{3AMIGOS} we get a better bound for the degree of the FI-module $H^i\big(\C_{\bullet}(\Sigma);\mathbb{Q}\big)$ and get the specific bounds for the stability type.%Theorem \ref{DIM2}.\bigskip

\begin{theo} \label{DIM2}Let $\Sigma$ be a connected, oriented manifold of dimension $2$. For any $i\geq 0$, the FI-module $H^i(\C_{\bullet}(\Sigma);\mathbb{Q})$ is finitely generated of weight $\leq 2i$ and has stability type at most $(2i,2i)$ when $\Sigma$ is a closed surface, at most $(0,2i)$ when $\partial \Sigma$ is nonempty, and at most $(3i-1,2i)$ otherwise.
\end{theo}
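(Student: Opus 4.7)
The plan is to invoke Theorem \ref{MAINSPEC} on the Leray spectral sequence $E_r^{p,q}$ of the open inclusion $\C_n(\Sigma)\hookrightarrow\Sigma^n$, assembled for all $n$ into a first-quadrant spectral sequence of FI-modules over $\mathbb{Q}$ converging to the graded FI-module $H^{*}(\C_\bullet(\Sigma);\mathbb{Q})$. With this setup, the entire problem reduces to controlling the stability type of the $E_2$-page and then reading off the conclusion for $H^i$.

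The decisive step is to show that, in the surface case, each $E_2^{p,q}$ is a finitely generated FI-module of weight $\leq p+2q$ with injectivity degree $\leq 2q$ and surjectivity degree $\leq p+2q$; equivalently, the hypotheses of Theorem \ref{MAINSPEC} hold with $\alpha=1$, $\beta=2$, which satisfies the constraint $2\alpha\leq\beta$. The way I would establish this is to stratify $\Sigma^n$ by the closures of its partial diagonals, so that the Leray sheaves $\mathcal{H}^q$ become constructible with stalks given by the Arnol'd cohomology of the configuration space of a disc. Because $\dim\Sigma=2$, each two-point collision contributes exactly one unit of cohomological degree together with weight $2$ (two indices of $[n]$ become identified), and this is what forces $\beta=2$ in place of the $\beta=1$ that was valid for $d\geq 3$ in Theorem \ref{DIM3}. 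Applying the K\"unneth-type description of Proposition \ref{PROD} to the factors coming from the uncollapsed coordinates exhibits $E_2^{p,q}$ as a direct sum of FI-modules of the form $M(W)$ generated in degree $\leq p+2q$, yielding the claimed bounds.

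Given this input, Theorem \ref{MAINSPEC} immediately produces weight $\leq 2i$ and stability type $((2\beta-\alpha)i-\alpha,\beta i)=(3i-1,2i)$, which is the bound for the ``otherwise'' case. When $\partial\Sigma\neq\emptyset$, the surface deformation retracts to a graph, so $H^{*}(\Sigma;\mathbb{Q})$ is concentrated in degrees $0$ and $1$ and $H^{*}(\Sigma^\bullet;\mathbb{Q})$ is an FI\#-module; the same stratification argument refines to show that every $E_2^{p,q}$ is itself an FI\#-module, and since any FI\#-module has injectivity degree $0$, Theorem \ref{SPECFIsharp} upgrades the bound to $(0,2i)$. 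For closed $\Sigma$, one sharpens $(3i-1,2i)$ to $(2i,2i)$ by invoking Poincar\'e duality on $\C_n(\Sigma)$ to absorb the injectivity loss that the successive applications of \cite[Proposition 2.45]{3AMIGOS} inside Lemma \ref{Er} introduce in the generic open case.

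The main obstacle will be the careful stalk-by-stalk bookkeeping of the Leray spectral sequence in dimension two, which is what forces the replacement $\beta=1\mapsto\beta=2$ and is genuinely more delicate than the $d\geq 3$ case treated in Theorem \ref{DIM3}. The closed-surface refinement also requires an extra input (Poincar\'e duality, or the dual language of Borel--Moore homology) beyond the formal spectral sequence estimate, since it cannot be obtained by tuning the constants $(\alpha,\beta)$ alone within the framework of Theorem \ref{MAINSPEC}.
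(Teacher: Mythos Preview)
Your overall strategy---run the Leray spectral sequence of $\C_\bullet(\Sigma)\hookrightarrow\Sigma^\bullet$, identify $E_2^{p,q}$ as a sum of $M(W_k)$ with $k\leq p+2q$, and feed $\alpha=1$, $\beta=2$ into Theorem~\ref{MAINSPEC}---is exactly what the paper does, and it correctly yields the generic bound $(3i-1,2i)$ and weight $\leq 2i$.

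The closed-surface refinement, however, has a genuine gap. You propose to improve $(3i-1,2i)$ to $(2i,2i)$ via Poincar\'e duality on $\C_n(\Sigma)$, but you do not explain how duality between $H^i$ and $H^{2n-i}$ (with the target degree depending on $n$) translates into a uniform bound on the injectivity degree of the FI-module $H^i(\C_\bullet(\Sigma))$; I do not see how to make this work. The paper's mechanism is entirely different: a closed oriented surface is a smooth complex projective variety, and Totaro's theorem (\cite[Theorem~3]{TOTARO}) forces the Leray spectral sequence to degenerate at $E_3$. One then applies Lemma~\ref{Er} only at $r=3$, where the injectivity bound reads $\alpha p+\beta q+(\beta-\alpha)\cdot 3+(\alpha-2\beta)=p+2q$, giving $E_\infty^{p,i-p}$ stability type at most $(2i-p,2i-p)$ and hence $(2i,2i)$ for $H^i$. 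The improvement comes from early degeneration, not from duality.

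For the boundary case your conclusion is right but your justification is off. That $\Sigma$ deformation-retracts to a graph, or that $H^*(\Sigma^\bullet)$ is an FI\#-module, is true for any $\Sigma$ and is not what singles out the boundary case. The actual input is that when $\partial\Sigma\neq\emptyset$ the co-FI-space $\C_\bullet(\Sigma)$ itself extends to a co-FI\#-space (one can push new points in from a collar of the boundary; this is \cite[Proposition~4.6]{3AMIGOS}), so $H^i(\C_\bullet(\Sigma))$ is an FI\#-module and therefore has injectivity degree $0$ automatically. Note that knowing each $E_2^{p,q}$ is an FI\#-module does not by itself force the abutment to be one, since FI\#-modules are not closed under arbitrary subquotients.
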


\noindent\begin{proof}%({\bf  of Theorem \ref{DIM2}})
We have a spectral sequence of FI-modules $$E^{p,q}_*=E^{p,q}_*(\C_{\bullet}(\Sigma)\rightarrow \Sigma^{\bullet})$$ converging to the graded FI-module $H^*(\C_{\bullet}(\Sigma))$. 
For any $p,q\geq 0$ the FI-module $E_2^{p,q}$ is the direct sum of FI-modules of the form $M(W_k)$ where $W_k$ is a certain $S_k$-representation. Moreover, each summand satisfies $k\leq p+2q$ (see \cite[Section 3.3.]{CHURCH}). Hence, for every $p,q\geq 0$  we have that $E_2^{p,q}$ is finitely generated in degree $\leq p+2q$ and has stability type at most $(0,p+2q)$. This is precisely the setting needed for our spectral sequence argument in Section \ref{SS} with constants $\alpha=1$ and $\beta=2$. 
Then for each $i\geq 0$ the FI-module $H^i(\C_{\bullet}(\Sigma))$ is finitely generated with stability type at most $(3i-1,2i)$.

In addition, Totaro proved in \cite[Theorem 3]{TOTARO} that if $M$ is a smooth complex projective variety, then  $E_\infty\big(\C_n(M)\hookrightarrow M^n\big)=E_3$. This is the case when $\Sigma$ is a closed surface. Therefore we can use Lemma \ref{Er} to improve the bounds for the stability type of $E_\infty^{p,i-p}$ to be at most $(2i,2i)$, which gives the corresponding bounds stated before. 

On the other hand, if $\partial \Sigma$ is nonempty, \cite[Proposition 4.6]{3AMIGOS} implies that  $H^i(\C_\bullet(\Sigma))$ has an $FI\#$-module structure and the injectivity degree is $0$.

Finally, observe that for any $0\leq i$ and $0\leq p\leq i$ the FI-module $E_{\infty}^{p,i-p}$ is a subquotient of  the FI-module $E_2^{p,i-p}$ of weight $p+2q$. It follows that weight$(E_{\infty}^{p,i-p})\leq 2i-p\leq 2i$, which implies that $H^i(\C_{\bullet}(\Sigma))$ has weight at most $2i$.
\end{proof}\bigskip

\noindent {\bf Remark:}  In \cite[Section 2.6]{3AMIGOS} finite generation of an FI-module is related with representation stability. In particular, Theorem \ref{DIM2} together with \cite[Proposition 2.58]{3AMIGOS} imply that the sequence $H^i(\C_n(\Sigma);\mathbb{Q})$ is uniformly representation stable and we recover the stable range of $n\geq 4i$ for the cases when $\Sigma$ is closed or has non-empty boundary, which was first obtained in \cite[Theorem 1]{CHURCH}.\bigskip

\noindent For manifolds with non-empty boundary it follows from \cite [Proposition 4.6]{3AMIGOS} that $H^i(\C_\bullet(M);R)$ is an FI\#-module for any commutative ring $R$. The argument of Section \ref{SSFIsharp} implies the following result.

\begin{theo}[Theorem 4.7 in \cite{3AMIGOS}]\label{CONF_BDRY}
Let $M$ be a connected, oriented manifold of dimension $d\geq 2$ which is the interior of a compact manifold with non-empty boundary. Let $k$ be any field or $\mathbb{Z}$, then for each $i\geq 0$ the FI\#-module $H^i(\C_\bullet(M);k)$ over $k$ is finitely generated by $O(n^{2i})$ elements.  
\end{theo}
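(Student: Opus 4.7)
The plan is to set up this statement as a direct application of Theorem \ref{SPECFIsharp} to the Leray spectral sequence used already in the proof of Theorem \ref{DIM2}, with the extra FI\#-structure provided by the boundary hypothesis. First, I would assemble, for all $n$ simultaneously, the Leray spectral sequences of the inclusions $\C_n(M)\hookrightarrow M^n$. By naturality, this gives a first-quadrant spectral sequence of FI-modules
$$E^{p,q}_*=E^{p,q}_*(\C_{\bullet}(M)\rightarrow M^{\bullet})$$
over $k$, converging to the graded FI-module $H^*(\C_{\bullet}(M);k)$. Because $M$ is the interior of a compact manifold with non-empty boundary, \cite[Proposition 4.6]{3AMIGOS} furnishes $H^*(\C_\bullet(M);k)$ with an FI\#-structure, and in fact the whole spectral sequence lifts to one of FI\#-modules.

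Next, I would verify that each $E_2^{p,q}$ is an FI\#-module finitely generated in degree $\leq p+2q$. Following the approach used in the proof of Theorem \ref{DIM2} (and as recalled in \cite[Section 3.3]{CHURCH} and \cite[Section 4]{3AMIGOS}), the $E_2$-page of the Leray spectral sequence for ordered configuration spaces decomposes as a sum of pieces indexed by partitions of a subset of $[n]$: one tensors the cohomology of $M^\bullet$ (a finitely generated FI\#-module by Proposition \ref{PROD}) against local cohomology classes supported on diagonals, where each ``diagonal class'' involving $j$ points contributes cohomological degree at least $j$. Keeping track of these weights shows that each summand contributing to $E_2^{p,q}$ has the form $M(W_k)$ for some $S_k$-representation $W_k$ with $k\leq p+2q$, which is an FI\#-module finitely generated in degree $\leq p+2q$. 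Closedness of this property under direct sums yields the claim for $E_2^{p,q}$.

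With this in hand, I am in the setting of Section \ref{SSFIsharp} with constants $\alpha=1$ and $\beta=2$ (so $\alpha\leq \beta$ as required). Applying Theorem \ref{SPECFIsharp} directly gives that $H^i(\C_\bullet(M);k)$ is an FI\#-module finitely generated in degree $\leq 2i$. Finally, the translation from degree of generation to size follows from \cite[Corollary 2.27]{3AMIGOS}, which for FI\#-modules converts the bound ``finitely generated in degree $\leq 2i$'' into $\dim_k H^i(\C_n(M);k)=O(n^{2i})$; the argument works verbatim for $k=\mathbb{Z}$ after replacing dimension with rank, exactly as in the proof of Theorem \ref{SPECFIsharp}.

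The main obstacle is the middle step, namely exhibiting the FI\#-structure and degree bound on the $E_2$-page over arbitrary coefficients rather than only over $\mathbb{Q}$. Over $\mathbb{Q}$ the decomposition into $M(W_k)$ is available by the explicit description of $E_2$ in terms of products of one-point cohomology and Orlik--Solomon algebras of linear arrangements, together with the direct sum decomposition of rational $S_n$-representations; over a general ring we instead rely on the stratification of $M^n$ by diagonals (which is FI\#-equivariant for open manifolds with boundary, since the boundary provides ``room'' to define the extra maps in FI\#) to identify $E_2^{p,q}$ with a direct sum of FI\#-modules of the stated generation degree without invoking any decomposition into irreducibles.
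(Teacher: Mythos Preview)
Your proposal is correct and follows essentially the same approach as the paper: the paper's proof is the one sentence ``The argument of Section \ref{SSFIsharp} implies the following result,'' together with the reference to \cite[Proposition 4.6]{3AMIGOS} for the FI\#-structure, and you have accurately unpacked what that entails (Leray spectral sequence of FI\#-modules, $E_2^{p,q}$ finitely generated in degree $\leq p+2q$, then Theorem \ref{SPECFIsharp} with $\alpha=1$, $\beta=2$). Your identification of the one genuine subtlety---that the FI\#-structure and the degree bound on $E_2^{p,q}$ must hold over arbitrary $k$, not just $\mathbb{Q}$---is exactly the point the paper leaves implicit and defers to \cite{3AMIGOS}.
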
 
%{\bf pure braid example included here}

\section{FI$[G]$-modules}\label{FIGMod}
Here we introduce the notion on an FI$[G]$-module. Basically we want to incorporate the action of a group $G$ on our sequences of $S_n$-representations. These types of FI-modules will allow us to construct new FI-modules by taking cohomology with twisted coefficients. We will see how in some situations we can use finite generation of the original FI$[G]$-module to get finite generation and specific bounds for the new FI-modules. In Section \ref{FIBSPEC} we use this setting in spectral sequence arguments for cohomology of fibrations and groups extensions.

\begin{definition} Let $R$ be any commutative ring and let $G$ be a group. An  {\it FI$[G]$-module} $V$ over $R$ is a functor from the category {\bf FI} to the category {\bf G-Mod$_R$} of $G$-modules over $R$.  We say that an FI$[G]$-module $V$ is {\it finitely generated} if it is finitely generated as an FI-module. Similarly an {\it $\FIsG$-module} $V$ over $R$ is a functor from the category {\bf FI\#} to the category {\bf G-Mod$_R$}. \end{definition}

\noindent {\bf FI$[G]$-modules and consistent sequences compatibles with $G$-actions:}
For an FI$[G]$-module $V$, for each $\sigma\in S_n$ the  induced linear automorphim  $\sigma_*:V_n\rightarrow V_n$ is a $G$-map. Hence the $S_n$-action and the $G$-action on $V_n$ commute. If we denote by $\phi_n$ the map obtained by applying $V$ the standard inclusion $I_n$ (i.e. $\phi_n=V(I_n)$)%:\{1,2,\ldots,n\}\hookrightarrow \{1,2,\ldots, n+1\}$
, we have that  $\{V_n, \phi_n\}$ is a {\it consistent sequence of $S_n$-representations compatible with $G$-actions} in the sense of \cite{JIM}.% and satisfies the condition of Proposition \ref{CON-FI}.

\subsection{Getting new FI-modules from  FI$[G]$-modules }%Furthermore 
 Let $V$ be an FI$[G]$-module over $R$. Consider a path connected space $X$ with fundamental group $G$. For each integer $p\geq 0$ we have a covariant functor $H^p(X;\_\_)$  from the category {\bf G-Mod$_R$} to the category {\bf Mod}$_R$. Hence we have a new  FI-module $H^p(X;V)$ over $R$ where $H^p(X;V)_n:=H^p(X;V_n)$, the $p$th cohomology of $X$  with local coefficients in the $G$-module $V_n$ (see \cite[Section 3.H]{HATCHER}). Moreover the functor $H^*(X;V)$ given by  $H(X;V)_n :=H^*(X;V_n)$  is a graded FI-module over $R$.
 %We are now ready to present the proof of Theorem \ref{FIG}.\bigskip

\begin{theo}[Cohomology with coefficients in a f.g. $\FIG$-module]\label{FIG} 

Let $G$ be the fundamental group of a connected CW complex $X$ with finitely many cells in each dimension. If $V$ is a finitely generated FI$[G]$-module over a Noetherian ring $R$, then for every $p\geq 0$, the FI-module $H^p(X;V)$ is finitely generated over $R$.

Moreover, if $R=\mathbb{Q}$ and $V$ has weight $\leq m$ and stability degree $N$, then the FI-module $H^p(X;V)$ has weight $\leq m$ and stability degree $N$.
\end{theo}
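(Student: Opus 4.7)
The idea is to compute $H^p(X;V_n)$ uniformly in $n$ via the cellular cochain complex with local coefficients, and then observe that this complex is itself a complex of FI-modules in the natural way.

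First I would fix a CW structure on $X$ with $k_p < \infty$ cells in each dimension $p$, and pass to the universal cover $\widetilde{X}$. The cellular chain complex $C_\bullet(\widetilde{X})$ is then a chain complex of free $\mathbb{Z}G$-modules with $C_p(\widetilde{X})$ of rank $k_p$. For any $G$-module $W$ over $R$ one has $H^p(X;W) = H^p\bigl(\mathrm{Hom}_{\mathbb{Z}G}(C_\bullet(\widetilde{X}), W)\bigr)$, and a choice of $\mathbb{Z}G$-basis yields a natural isomorphism of $R$-modules $\mathrm{Hom}_{\mathbb{Z}G}(C_p(\widetilde{X}),W) \cong W^{k_p}$. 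Applied to $W = V_n$ and varying $n$, this gives an isomorphism of FI-modules
\[
C^p(X;V) \;\cong\; V^{k_p},
\]
because the FI-maps $f_* \colon V_m \to V_n$ are $G$-equivariant by definition of an $\FIG$-module, so post-composition with $f_*$ commutes with the identification above. Moreover the coboundary $\delta \colon C^p(X;V) \to C^{p+1}(X;V)$, induced by the $\mathbb{Z}G$-linear boundary of $C_\bullet(\widetilde{X})$, commutes with every $f_*$ and therefore is a morphism of FI-modules.

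Next, since $V$ is finitely generated, so is each direct sum $V^{k_p}$. Taking cohomology of the complex of FI-modules $V^{k_{p-1}} \xrightarrow{\delta^{p-1}} V^{k_p} \xrightarrow{\delta^p} V^{k_{p+1}}$ gives $H^p(X;V)$ as a subquotient of the finitely generated FI-module $V^{k_p}$ over the Noetherian ring $R$, and so the Noetherian property \cite[Theorem 1.1]{4AMIGOS} yields finite generation of $H^p(X;V)$.

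For the bounds when $R = \mathbb{Q}$: weight is closed under finite direct sums (the weight of $V^{k_p}$ equals the weight of $V$) and under subquotients \cite[Section 2.5]{3AMIGOS}, so $H^p(X;V)$ has weight $\leq m$. For the stability degree, note that $V^{k_p}$, $V^{k_{p-1}}$ and $V^{k_{p+1}}$ each have stability degree $\leq N$. Applying \cite[Proposition 2.45]{3AMIGOS} to the three-term complex above bounds the injectivity and surjectivity degrees of the cohomology at the middle spot by the maximum of the relevant stability degrees of the three terms, which is $N$.

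\textbf{Main obstacle.} The content of the argument is essentially bookkeeping: the conceptual step is recognizing that a finitely generated $\FIG$-module plugged into the $\mathbb{Z}G$-variable of the local-coefficient cellular cochain complex produces a complex of FI-modules isomorphic to direct sums of $V$. The only point that requires care is verifying that the identification $\mathrm{Hom}_{\mathbb{Z}G}(C_p(\widetilde{X}),V_n) \cong V_n^{k_p}$ really is natural in the FI-variable — i.e. that choosing one $\mathbb{Z}G$-basis of $C_p(\widetilde{X})$ for all $n$ simultaneously makes post-composition by the $G$-equivariant maps $f_*$ correspond to the diagonal action on $V^{k_p}$. Once this is in place, both the qualitative finite-generation statement and the quantitative weight/stability bounds follow directly from the closure properties of FI-modules under finite direct sums and subquotients already established in \cite{3AMIGOS} and \cite{4AMIGOS}.
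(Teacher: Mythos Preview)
Your proposal is correct and follows essentially the same approach as the paper's own proof: identify $C^p(X;V)$ with the finite direct sum $V^{\oplus k_p}$ via a chosen $\mathbb{Z}G$-basis of $C_p(\widetilde{X})$, then deduce finite generation from the Noetherian property and the weight/stability bounds from closure under direct sums, subquotients, and \cite[Proposition 2.45]{3AMIGOS}. The naturality verification you flag as the ``main obstacle'' is exactly the commutative diagram the paper writes down.
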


\noindent\begin{proof}
Given that $G=\pi_1(X)$, the universal cover $\tilde{X}$ of $X$  has a $G$-equivariant cellular chain complex. Since $X$ has finitely many cells in each dimension, for each $p\geq 0$ the group $C_p(\tilde{X})$  is a free $G$-module of finite rank, say $C_p(\tilde{X})\approx (\mathbb{Z}G)^{d_p}$. A preferred $G$-basis $x_1,\ldots,x_{d_p}$ can be provided by selecting a $p$-cell in $\tilde{X}$ over each cell in $X$.

For each $p\geq 0$ and $n\in\mathbb{N}$ we have an isomorphism of $G$-modules   $\mathcal{H}om_G(C_p(\tilde{X}),V_n)\approx V_n^{\oplus d_p}$, given by $h\mapsto\big(h(x_1),\ldots,h(x_{d_p})\big)$.
Moreover, for any morphism $\phi: V_m\rightarrow V_n$, the following diagram commutes:
\begin{equation*}
\xymatrix{
\mathcal{H}om_G(C_p(\tilde{X}),V_m)\ar[d]_{\approx}\ar[r]^{\phi\circ\_}&\mathcal{H}om_G(C_p(\tilde{X}),V_{n})\ar[d]^{\approx}\\
V_m^{\oplus d_p}\ar[r]^{\phi^{\oplus d_p}}&V_n^{\oplus d_p}}
\end{equation*}

Therefore the FI$[G]$-module $C^p(X;V)$ given by $C^p(X;V)_n:=\mathcal{H}om_G(C_p(\tilde{X}),V_n)$  
is precisely the direct sum of FI$[G]$-modules $V^{\oplus d_p}$. Therefore, finite generation of the FI-module $H^p(X;V)$  follows since it is a subquotient of the finitely generated FI-module $C^{p}(X,V)$.

If $R=\mathbb{Q}$, since the weight of an FI-module does not increase when taking extensions, then we have that $V^{\oplus d_p}$ is finitely generated of weight $\leq m$. Moreover,   $V^{\oplus d_p}$ has stability degree $N$ because $V$ has stability degree $N$. Furthermore, the FI-module $H^p(X;V)$  is obtained from the complex of FI-modules 
\begin{equation*}
\xymatrix{
C^{p-1}(X,V)\ar[r]^{\delta_{p-1}}& C^{p}(X,V) \ar[r]^{\delta_p}& C^{p+1}(X,V)}
\end{equation*}
where we have that each FI-module has stability degree $N$ and is finitely generated of degree $\leq m$. The weight of an FI-module is preserved under subquotients and from \cite[Proposition 2.45]{3AMIGOS} applied to the previous complex we get the desired stability degree.
\end{proof}\bigskip

\noindent{\bf Remark:} For each integer $p\geq 0$ we have a covariant functor $H^p(G;\_\_)$ from  {\bf G-Mod$_R$} to {\bf Mod$_R$} (see \cite{BROWN}). Hence, if $V$ is an $\FIG$-module, we have the FI-module $H^p(G;V)$ given by $H^p(G;V)_n :=H^p(G;V_n)$. If $G$ is a group of type $FP_\infty$, then the space $X=K(G,1)$ satisfies the hypotheses of Theorem \ref{FIG} and the FI-module $H^p(X;V)$ is precisely $H^p(G;V)$.\bigskip

\noindent{\bf The case of $\FIsG$-modules:} Next we state the equivalent result to Theorem \ref{FIG} when we take coefficients in a finitely generated  $\FIsG$-module.

\begin{theo}[Cohomology with coefficients in a f.g. $\FIsG$-module] \label{FIsharpG} Let $k$ be any field or $\mathbb{Z}$ and suppose that $G$ is the fundamental group of a connected CW complex $X$ with finitely many cells in each dimension. If $V$ is an $\FIsG$-module over $k$ finitely generated in degree $\leq m$, then  for every $p\geq 0$, the FI\#-module $H^p(X;V)$ is finitely generated in degree $\leq m$.
\end{theo}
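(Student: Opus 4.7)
The plan is to mimic the proof of Theorem \ref{FIG} but track the extra $\FIsG$-structure at every stage, then invoke the Noetherian property specific to finitely generated $\FIsG$-modules rather than the weight/stability degree machinery used over $\mathbb{Q}$.

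First, as in the proof of Theorem \ref{FIG}, I would fix a $G$-equivariant cellular chain complex for the universal cover $\tilde X$ and choose, in each dimension $p$, a $G$-basis $x_1, \dots, x_{d_p}$ of $C_p(\tilde X) \cong (\mathbb{Z}G)^{d_p}$. Applying $\mathcal{H}om_G(C_p(\tilde X), -)$ pointwise to $V$ produces the $\FIsG$-module $C^p(X; V)$ given by $\mathbf{n} \mapsto \mathcal{H}om_G(C_p(\tilde X), V_n)$; this is well-defined as a functor out of $\mathbf{FI\#}$ because $C_p(\tilde X)$ is a fixed $G$-module, so the functoriality of $C^p(X;V)$ in morphisms of $\mathbf{FI\#}$ is inherited from that of $V$. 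Evaluation on the chosen basis gives an isomorphism $C^p(X;V) \cong V^{\oplus d_p}$ of $\FIsG$-modules, and since finite direct sums of $\FIsG$-modules finitely generated in degree $\leq m$ are again finitely generated in degree $\leq m$, the $\FIsG$-module $C^p(X;V)$ is finitely generated in degree $\leq m$.

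Next I would observe that the coboundary maps $\delta_p \colon C^p(X;V) \to C^{p+1}(X;V)$ are morphisms of $\FIsG$-modules: they are obtained by precomposition with the boundary operator of $C_*(\tilde X)$, which is a $G$-map of fixed $G$-modules and therefore commutes with the $\FIsG$-structure of $V$. Consequently $\ker \delta_p$ is a sub-$\FIsG$-module of $C^p(X;V)$. By the Noetherian property for finitely generated $\FIsG$-modules recorded in Section \ref{FI} (following \cite[Corollaries 2.25 \& 2.26]{3AMIGOS}), every sub-$\FIsG$-module of a $\FIsG$-module finitely generated in degree $\leq m$ is again finitely generated in degree $\leq m$. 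Thus $\ker \delta_p$ and its sub-$\FIsG$-module $\im \delta_{p-1}$ are both finitely generated in degree $\leq m$.

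Finally, the quotient $H^p(X;V) = \ker \delta_p / \im \delta_{p-1}$ is a quotient of $\FIsG$-modules, which inherits the $\FIsG$-structure, and quotients of FI-modules finitely generated in degree $\leq m$ are finitely generated in degree $\leq m$ (\cite[Proposition 2.17]{3AMIGOS}). This yields the desired bound. The only real obstacle is the first step, namely confirming that the identification $\mathcal{H}om_G(C_p(\tilde X), V_n) \cong V_n^{\oplus d_p}$ and the differentials genuinely respect the $\FIsG$-structure (and not merely the underlying FI-structure); once this bookkeeping is done, the Noetherian property for $\FIsG$-modules supplies the rest, and no assumption on characteristic of $k$ (or the ring being a field at all, as opposed to $\mathbb{Z}$) is needed.
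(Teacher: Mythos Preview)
Your proof is correct, but it follows a different route from the paper's.  The paper argues by dimension counting: for $k$ a field one has $\dim_k C^p(X;V_n)=d_p\cdot\dim_k V_n=O(n^m)$ by \cite[Corollary~2.27]{3AMIGOS}, hence the subquotient $H^p(X;V_n)$ also has dimension $O(n^m)$, and the same corollary (read in reverse, using that $H^p(X;V)$ is already known to be an FI\#-module) then gives finite generation in degree $\leq m$; for $k=\mathbb{Z}$ the identical argument runs with rank in place of dimension.  Your argument is instead purely structural: the differentials $\delta_p$ are maps of FI\#-modules, so $\ker\delta_p$ is a sub-FI\#-module of $V^{\oplus d_p}$, and the Noetherian property for FI\#-modules recalled in Section~\ref{FI} preserves the degree bound.

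Each approach buys something.  Yours is cleaner and, as you note, works over an arbitrary commutative ring, so the restriction to fields or $\mathbb{Z}$ in the statement becomes unnecessary.  The paper's dimension-counting trick, however, is the one that generalizes to the spectral sequence setting of Theorem~\ref{SPECFIsharp}, where the intermediate pages $E_r^{p,q}$ are only sub\emph{quotients} of FI\#-modules (hence need not themselves be FI\#-modules), so that one cannot invoke the FI\#-Noetherian property and must instead track dimension bounds through the filtration.
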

\begin{proof}
Clearly $H^p(X;V)$ is a covariant functor from {\bf FI\#} to {\bf Mod}$_k$. First suppose that $k$ is any field. Keeping the notation from the previous proof we have that
$$\dim_k  C^p(X,V_n) =\dim_k V_n^{\oplus d_p}=O(n^m).$$
By hypothesis and \cite[Corollary 2.27]{3AMIGOS} it follows that  $\dim_k V_n=O(n^m)$. Then dimension over $k$ of the subquotient $H^p(X,V_n)$ is $O(n^m)$ and \cite[Corollary 2.27]{3AMIGOS} gives us the desired conclusion. A similar argument applies for $k=\mathbb{Z}$ considering rank instead of dimension.
\end{proof}

\subsection{$\FIG$-modules and spectral sequences}\label{FIBSPEC}

Let $X$ be a connected CW complex with finitely many cells in each dimension and let $x\in X$ be a fixed base point. Suppose that the  fundamental group $\pi_1(X,x)$ is $G$. Consider a functor from {\bf FI$^{op}$} to the category {\bf Fib}($X$) of fibrations over $X$ (a {\it co-FI-fibration over $X$}). Let 
$$E_n\rightarrow X$$ be the fibration associated to {\bf n}, and $H_n$ the fiber over the basepoint $x$. We denote by $E$ the co-FI space of total spaces {\bf n}$\mapsto E_n$ and by $H$ the co-FI space of fibers  {\bf n}$\mapsto H_n$. We can think of $E\rightarrow X$ as a pointwise fibration over $X$ with ``fiber" $H$.\bigskip

 Let us take, together for all $n$, the Leray-Serre spectral sequences associated to each fibration $E_n\rightarrow X$. The functoriality of the Leray-Serre spectral sequence implies that we have a spectral sequence of FI-modules $$E^{p,q}_*=E^{p,q}_*\big(E\rightarrow X\big)$$ converging to the graded FI-module $H^*(E)$.

The $E_2$-page of this spectral sequence is the $\FIG$-module
 $$E_2^{p,q}=H^p\big(X;H^q(H)\big)$$

\noindent {\bf Remark: } Observe that for any $q\geq 0$ and $n\geq 1$, we get an action of the fundamental group $G$ on $H^q(H_n;\mathbb{Q})$ from the $n$-th fibration, which gives to the FI-module $H^q(H;R)$ the structure of an $\FIG$-module over $R$. \bigskip

With this setting, we want to use Theorem \ref{FIG} and the spectral argument given in Section \ref{SS} to determine finiteness conditions for the graded FI-module $H^*(E;R)$ given that we know that the $\FIG$-module $H^q(H;R)$ is finitely generated  over $R$ and we have upper bounds for its degree and its stability degree when $R=\mathbb{Q}$.\bigskip

The typical situation that we will have in the examples in Section \ref{EG2} below is that the $\FIG$-module $H^q(H;\mathbb{Q})$ is finitely generated of weight $\leq \beta q$ with stability degree $\leq \beta q$, for some positive constant $\beta$.  Then Theorem \ref{FIG} gives us the following information about the $E_2$-page.

\begin{lemma}\label{E2}  Suppose that for any $q\geq 0$ the $\FIG$-module $H^q(H;\mathbb{Q})$ is finitely generated of weight $\leq \beta q$ with stability degree $\leq \beta q$. Then, for any $p,q\geq0$, the FI-module  $E_2^{p,q}=H^p\big(X;H^q(H)\big)$ has weight $\leq \beta q$ and stability degree $\leq \beta q$.
\end{lemma}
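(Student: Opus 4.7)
\medskip

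The plan is to apply Theorem \ref{FIG} directly to the $\FIG$-module $V = H^q(H;\mathbb{Q})$. The setup for the Leray--Serre spectral sequence was already arranged so that $X$ is a connected CW complex with finitely many cells in each dimension and $G = \pi_1(X,x)$; these are precisely the standing hypotheses required by Theorem \ref{FIG}. The monodromy action of $G$ on the fibers $H_n$ endows each cohomology group $H^q(H_n;\mathbb{Q})$ with a $G$-action compatible with the FI-structure, making $V = H^q(H;\mathbb{Q})$ an $\FIG$-module over $\mathbb{Q}$, as noted in the remark preceding the lemma.

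By the hypothesis of the lemma, $V$ is finitely generated with weight $\leq \beta q$ and stability degree $\leq \beta q$. Theorem \ref{FIG} (in its $R=\mathbb{Q}$ version with $m = \beta q$ and $N = \beta q$) then yields immediately that the FI-module
\[
E_2^{p,q} \;=\; H^p\big(X; H^q(H;\mathbb{Q})\big) \;=\; H^p(X;V)
\]
has weight $\leq \beta q$ and stability degree $\leq \beta q$, which is exactly the conclusion.

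The only thing that really requires a remark (rather than a computation) is the identification of the $E_2$-page of the Leray--Serre spectral sequence as cohomology with local coefficients in $H^q(H_n;\mathbb{Q})$, which is the standard $E_2 = H^p(X; \mathcal{H}^q)$ identification for a fibration over a connected base; once this is in hand, the lemma is really a corollary of Theorem \ref{FIG}, because the bounds $m$ and $N$ depend only on $q$ (not on $p$), so the weight and stability degree of $E_2^{p,q}$ inherit the same $\beta q$ bound uniformly in $p$.

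The main obstacle, such as it is, is bookkeeping: checking that the $\FIG$-module structure one gets from the naturality of the Leray--Serre spectral sequence is the same as the one obtained by functoriality from the co-FI-fibration $E \to X$, so that applying Theorem \ref{FIG} pointwise in $q$ actually produces an FI-module isomorphic to $E_2^{p,\bullet}$. This is essentially formal from the construction in Section \ref{FIBSPEC}, so no real difficulty arises.
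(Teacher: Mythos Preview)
Your proposal is correct and matches the paper's own treatment: the lemma is stated immediately after the sentence ``Then Theorem \ref{FIG} gives us the following information about the $E_2$-page,'' so the paper likewise regards it as a direct corollary of Theorem \ref{FIG} applied to $V=H^q(H;\mathbb{Q})$ with $m=N=\beta q$. There is nothing to add.
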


For a given $i\geq 0$ and $0\leq p\leq i$, the FI-module $E_{\infty}^{p,i-p}$ is a subquotient of $E_{2}^{p,i-p}$. Since the weight of an FI-module cannot increase when taking subquotients, it follows that the FI-module $E_{\infty}^{p,i-p}$ is finitely generated of weight $\leq \beta i$. Moreover, the spectral sequence gives a natural filtration of $H^i(E)$ by FI-modules

$$0\subseteq F^i_i\subseteq F^i_{i-1}\subseteq \ldots\subseteq F^i_1\subseteq F^i_0=H^i(E),$$  where, for $0\leq p\leq i$, the FI-module  $F^i_p$ is an extension of $F^i_{p+1}$ by $E_{\infty}^{p,i-p}$ of weight $\leq \beta i$. Since, by definition, the weight of an FI-module is preserved under extensions, therefore $H^i(E)$ has weight at most $\beta i$.
 
Furthermore, we have precisely the setting described in Section \ref{SS} for constants $\alpha=0$ and $\beta>0$  and Theorem \ref{MAINSPEC} takes the following form.

\begin{theo}\label{LERAYSERRE} For any $i\geq 0$ the FI-module $H^i(E;\mathbb{Q})$ is  finitely generated of weight at most $\beta i$ and has  stability type at most $(2\beta i, \beta i)$.
\end{theo}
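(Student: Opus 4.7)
The plan is to exhibit Theorem \ref{LERAYSERRE} as essentially a direct application of the machinery already developed: Lemma \ref{E2} controls the $E_2$-page, Theorem \ref{MAINSPEC} converts $E_2$-bounds into bounds on the abutment, and the weight bound follows from closure properties of weight.

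First I would invoke Lemma \ref{E2}, which tells us that every FI-module $E_2^{p,q} = H^p(X; H^q(H))$ on the $E_2$-page of the Leray--Serre spectral sequence of the co-FI-fibration $E \to X$ is finitely generated, of weight $\leq \beta q$, and of stability degree $\leq \beta q$. In particular, injectivity degree and surjectivity degree are both $\leq \beta q$. This matches exactly the hypothesis of Theorem \ref{MAINSPEC} with the parameter choice $\alpha = 0$ (and the given $\beta > 0$): the surjectivity bound $\beta q$ is of the form $\alpha p + \beta q$, the injectivity bound is $\beta q$, and the inequality $2\alpha \leq \beta$ is trivially satisfied. Plugging these constants into Theorem \ref{MAINSPEC}, the stated stability type bound for $H^i(E;\mathbb{Q})$ evaluates to $((2\beta - \alpha)i - \alpha, \beta i) = (2\beta i, \beta i)$, which is what is claimed. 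Finite generation over $\mathbb{Q}$ is simultaneously given.

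For the weight bound I would argue separately, since Theorem \ref{MAINSPEC} does not track weight. Each $E_\infty^{p,i-p}$ is a subquotient of $E_2^{p,i-p}$, so by preservation of weight under subquotients it has weight at most $\beta(i-p) \leq \beta i$. The natural filtration
\[
0 \subseteq F^i_i \subseteq F^i_{i-1} \subseteq \cdots \subseteq F^i_0 = H^i(E;\mathbb{Q})
\]
has successive quotients $F^i_p / F^i_{p+1} \cong E_\infty^{p,i-p}$. Since weight is also preserved under extensions, induction on the length of this filtration yields weight of $H^i(E;\mathbb{Q})$ at most $\beta i$.

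There is no genuine obstacle here; the theorem is a repackaging of Lemma \ref{E2} together with the general spectral sequence principle of Theorem \ref{MAINSPEC}, specialized to the $\alpha = 0$ case that arises naturally when the $E_2$-page admits a uniform stability-degree bound depending only on the fiber degree $q$. The only small point to verify is the parameter matching and the inequality $2\alpha \leq \beta$, both of which are immediate for $\alpha = 0$.
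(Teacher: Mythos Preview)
Your proposal is correct and follows essentially the same approach as the paper: invoke Lemma \ref{E2} to control the $E_2$-page, specialize Theorem \ref{MAINSPEC} with $\alpha=0$ to obtain finite generation and the stability type $(2\beta i,\beta i)$, and handle the weight bound separately via the subquotient/extension closure argument along the filtration by the $E_\infty^{p,i-p}$. The paper presents exactly this reasoning in the paragraphs immediately preceding the theorem statement.
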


\noindent {\bf The case of group extensions:} 
Let $G$ be a group of type $FP_\infty$. Consider a functor from {\bf FI$^{op}$} to the category of group extensions with quotient $G$ and isomorphisms of such (a {\it co-FI-group extension of $G$}).  Let 
$$ 1\rightarrow H_n\rightarrow E_n\rightarrow G\rightarrow 1$$
be the group extension associated to {\bf n} and denote by $E$ and $H$ the corresponding co-FI groups {\bf n}$\mapsto E_n$ and by {\bf n}$\mapsto H_n$.  
For each group extension there is an associated fibration
$$K(E_n,1)\rightarrow K(G,1)$$
with fiber over a fixed base point $x\in K(G,1)$ an Eilenberg-Maclane space $K(H_n,1)$.
Observe that the space $K(G,1)$ has the homotopy type of  a connected CW complex with finitely many cells in each dimension since $G$ is of type $FP_\infty$.  Hence, this gives us a functor from {\bf FI$^{op}$} to {\bf Fib}($K(G,1)$) as in the setting of Section \ref{FIBSPEC} and we obtain the conclusion of Theorem \ref{LERAYSERRE} about  the FI-modules $H^i(E)$. \bigskip 

\noindent{\bf Remarks:} The Leray-Serre spectral sequence associated to the  fibration above corresponds to the Hochschild-Serre spectral sequence associated to the original group extension. Notice that we could have considered this spectral sequence in our previous discussion.\bigskip

%{\bf Reference??}
\noindent{\bf The Hochschild-Serre spectral sequence and  FI\#-modules:}
Assume that we have a functor from {\bf FI\#$^{op}$} to the category of group extensions with quotient $G$, and not just from {\bf FI$^{op}$} as before. By taking the Hochschild-Serre spectral sequence associated to each group extension with coefficients in any field $k$ or $\mathbb{Z}$, we obtain with a first quadrant spectral sequence of {\bf FI\#}-modules converging to the graded {\bf FI\#}-modules $H^*(E;k)$. Furthermore, suppose that for any $q\geq 0$ the FI\#-module $H^q(H;k)$ is finitely generated over  $k$ in degree $\leq\beta q$, for some $\beta>0$. Then  Theorem \ref{FIsharpG} implies that for any $p,q\geq 0$, the FI\#-module $E_2^{p,q}$ is finitely generated in degree $\leq\beta q$. Since we have the setting from Section \ref{SSFIsharp} with $\alpha=0$ and $\beta>0$, we can conclude that for any $i\geq 0$ the FI\#-module $H^i(E,k)$ is finitely generated in degree $\leq \beta i$.

\section{Sequences of cohomology groups as  FI-modules (part II)}\label{EG2}

Let us apply the perspective described in Section \ref{FIGMod} to understand other sequences of cohomology groups as finitely generated FI-modules. Most of these sequences were already considered in \cite{JIM}. We will see here how the FI-module approach allows us to obtain more information.

\subsection{Cohomology of  moduli spaces $\mathcal{M}_{g,n}$ and pure mapping class groups\\ of surfaces}\label{PURESUR}
Let $2g+r>2$ and consider the functor from {\bf FI$^{op}$} to the category of group extensions of $G=\Mod_{g,r}$ given as follows. The group extension associated to $\mathbf{n}$ is
$$1\rightarrow \pi_1(\C_n(\Sigma_g^{r}))\rightarrow \PMod^n(\Sigma_{g,r})\rightarrow \Mod_{g,r}\rightarrow 1.$$
This is the {\it Birman exact sequence} introduced by Birman in \cite{BIRMAN}. A proof of the exactness can be found in \cite{FARBMARG}. To see that this association is indeed functorial we refer the reader to \cite[Section 5] {JIM}.

From \cite[Proposition 4.1]{4AMIGOS} we have that $H^q\big(\pi_1[\C_n(\Sigma_g^{r})];k\big)=H^q\big(\C_n(\Sigma_g^{r});k\big)$ is a finitely generated  $\FIG$-module over any field $k$. When $k=\mathbb{Q}$, it follows from Theorem \ref{DIM2} that it has weight $\leq 2q$  and stability degree $\leq 2q$. From our discussion in Section \ref{FIBSPEC} with $\beta=2$, we obtain the following statement.

\begin{theo}\label{MCG} Let $k$ be a field. 
 For any $i\geq 0$ and  $2g+r>2$ the FI-module  $H^i\big(\PMod^\bullet_{g,r};k\big)$ is finitely generated over $k$. If $k=\mathbb{Q}$, it has weight $\leq 2i$ and stability type at most $(4i,2i)$.\end{theo}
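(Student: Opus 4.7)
The plan is to apply the Hochschild--Serre spectral sequence to the Birman exact sequence, functorially in $\mathbf{n}$, and then invoke the FI$[G]$-module machinery of Theorem \ref{FIG} together with the spectral sequence argument encoded in Theorem \ref{LERAYSERRE} (respectively Theorem \ref{MAINSPEC} for the finite generation statement over general $k$). The point is that everything we need has already been set up: the Birman sequence is a co-FI-group extension of $G=\Mod_{g,r}$, and the fiber cohomology is the cohomology of a configuration space, which we have already analyzed in Theorem \ref{DIM2}.

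First I would observe that since $\C_n(\Sigma_{g,r})$ is a $K(\pi,1)$, the Hochschild--Serre spectral sequence of the Birman extension gives, functorially in $\mathbf{n}$, a first-quadrant spectral sequence of FI-modules
$$E_2^{p,q}=H^p\bigl(\Mod_{g,r};H^q(\C_n(\Sigma_{g,r});k)\bigr)\Longrightarrow H^{p+q}(\PMod^n_{g,r};k).$$
This is exactly the setup described in Section \ref{FIBSPEC}, with base $X=K(\Mod_{g,r},1)$ and co-FI-fiber $\mathbf{n}\mapsto K(\pi_1\C_n(\Sigma_{g,r}),1)$.

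Next I would verify the two inputs needed to feed this into the machinery. On one hand, $H^q(\C_\bullet(\Sigma_{g,r});k)$ carries the structure of a finitely generated FI$[\Mod_{g,r}]$-module: the $\Mod_{g,r}$-action comes from the Birman fibrations, finite generation over arbitrary $k$ follows from \cite[Proposition 4.1]{4AMIGOS}, and for $k=\mathbb{Q}$ Theorem \ref{DIM2} sharpens this to weight $\leq 2q$ and stability degree $\leq 2q$. On the other hand, because $2g+r>2$, the mapping class group $\Mod_{g,r}$ admits a $K(\pi,1)$ with finitely many cells in each dimension, so Theorem \ref{FIG} (resp.\ its statement over general $k$ via finite generation) applies and shows that each FI-module $E_2^{p,q}$ is finitely generated; when $k=\mathbb{Q}$ it has weight $\leq 2q$ and stability degree $\leq 2q$.

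At this point we are precisely in the setting of Section \ref{SS} with parameters $\alpha=0$ and $\beta=2$. For general $k$ the first assertion of Theorem \ref{MAINSPEC} yields finite generation of $H^i(\PMod^\bullet_{g,r};k)$, while for $k=\mathbb{Q}$ Theorem \ref{LERAYSERRE} gives weight $\leq 2i$ and stability type at most $(2\beta i,\beta i)=(4i,2i)$, as required. The only non-formal input I expect to need is the existence of a finite-dimensional $K(\Mod_{g,r},1)$ model with finitely many cells in each dimension—that is, that $\Mod_{g,r}$ is suitably of finite type—which is classical for finite-type surfaces with $2g+r>2$. Everything else is purely a matter of chaining the results of Sections \ref{SS} and \ref{FIGMod} with Theorem \ref{DIM2}.
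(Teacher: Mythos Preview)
Your proposal is correct and follows essentially the same approach as the paper: both use the Birman exact sequence as a co-FI-group extension of $\Mod_{g,r}$, invoke \cite[Proposition~4.1]{4AMIGOS} and Theorem~\ref{DIM2} to control the fiber cohomology $H^q(\C_\bullet(\Sigma_{g,r});k)$ as an FI$[\Mod_{g,r}]$-module, and then plug $\beta=2$ into the machinery of Section~\ref{FIBSPEC} (Theorems~\ref{FIG}, \ref{MAINSPEC}, \ref{LERAYSERRE}). Your write-up simply unpacks in more detail what the paper compresses into the phrase ``from our discussion in Section~\ref{FIBSPEC} with $\beta=2$.''
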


\subsection{Cohomology of pure mapping class groups for higher dimensional manifolds }\label{DIM>3}

Let $M$ be a smooth connected manifold of dimension $d\geq 3$ and suppose that the fundamental group $\pi_1(M)$ has trivial center or $\Diff (M)$ is simply connected. Moreover, we assume that $\Mod(M)$ is of type $FP_{\infty}$.

Consider the functor from {\bf FI$^{op}$} that associates to each {\bf n} the group extensions of $G=\Mod(M)$

\begin{equation}\label{BIRDIM>3}
\xymatrix{
1\ar[r]& \pi_1(\C_n(\mathring{M}))\ar[r] &\PMod^n (M)\ar[r] &\Mod (M) \ar[r] &1,}
\end{equation}

where $\mathring{M}$ denotes the interior of $M$.
For a proof of the existence of this Birman exact sequence see \cite[Section 6]{JIM}. 

Let $\mathfrak{p}=(p_1,\ldots, p_n)\in \C_n(\mathring{M})$  be a fixed base point. Since $d\geq 3$, then from \cite[Theorem 1]{BIRMAN1969} it follows that the fundamental group
$$\pi_1(\C_n(\mathring{M}),\mathfrak{p})\approx\pi_1(\C_n(M),\mathfrak{p})\approx \pi_1(M^n,\mathfrak{p})\approx \prod_{i=1}^n \pi_1(M,p_i).$$ 

 Hence the FI-module $H^q\big(\pi_1(\C_\bullet(\mathring{M});k)\big)$ is precisely $H^q\big( \pi_1(M)^\bullet;k\big)$. If the group $\pi_1(M)$ is of type $FP_{\infty}$, then from Proposition \ref{PROD} we have that this $\FIG$-module is finitely generated over $k$ and has weight $\leq q$  with stability degree $\leq q$ when $k=\mathbb{Q}$. From our discussion in Section \ref{FIBSPEC} with $\beta=1$ we can conclude the following result.

\begin{theo}\label{PUREDIM>3}
Let $M$ be a smooth connected manifold of dimension $d\geq 3$ such that $\pi_1(M)$ is of type $FP_{\infty}$ (e.g. $M$ compact). Suppose that $\pi_1(M)$ has trivial center or that $\Diff(M)$ is simply connected and assume that the group $\Mod(M)$ is of type $FP_{\infty}$. Then for any field $k$ and $i\geq 0$ the FI-module $H^i\big(\PMod^\bullet(M);k\big)$ is finitely generated over $k$ and has weight $\leq i$ and stability type at most $(2i,i)$ when $k=\mathbb{Q}$.
\end{theo}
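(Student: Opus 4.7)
The plan is to apply the spectral sequence framework for co-FI-group extensions developed in Section \ref{FIBSPEC} to the Birman exact sequence (\ref{BIRDIM>3}). The hypotheses on $\pi_1(M)$ having trivial center or $\Diff(M)$ being simply connected are exactly what is needed to establish that (\ref{BIRDIM>3}) exists and assembles into a co-FI-group extension of the fixed group $G=\Mod(M)$; this was already verified in \cite[Section 6]{JIM}. Since $\Mod(M)$ is of type $FP_\infty$, the space $K(\Mod(M),1)$ has the homotopy type of a connected CW complex with finitely many cells in each dimension, so we are in the setting of Section \ref{FIBSPEC}.

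Next I identify the $E_2$-page of the associated Hochschild--Serre spectral sequence of FI-modules. Because $d\geq 3$, the result of Birman \cite{BIRMAN1969} gives, for any basepoint $\mathfrak{p}=(p_1,\ldots,p_n)\in\C_n(\mathring M)$, a canonical isomorphism
\[
\pi_1\bigl(\C_n(\mathring M),\mathfrak{p}\bigr)\;\cong\;\prod_{i=1}^n \pi_1(M,p_i),
\]
and this isomorphism is compatible with the co-FI structure (both sides are co-FI-groups under the obvious restriction/projection maps). Consequently, the FI$[G]$-module $H^q\bigl(\pi_1(\C_\bullet(\mathring M));k\bigr)$ is identified with $H^q\bigl(\pi_1(M)^\bullet;k\bigr)$. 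Since $\pi_1(M)$ is of type $FP_\infty$, the space $K(\pi_1(M),1)$ has finite-dimensional cohomology over $k$ in every degree, so Proposition \ref{PROD} applies: $H^q(\pi_1(M)^\bullet;k)$ is a finitely generated FI$\#$-module (a fortiori a finitely generated FI$[G]$-module) over $k$; and when $k=\mathbb{Q}$ it has weight $\leq q$ and stability degree $\leq q$.

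Now I feed this into Theorem \ref{FIG}: for any field $k$, $E_2^{p,q}=H^p\bigl(\Mod(M);H^q(\pi_1(M)^\bullet;k)\bigr)$ is a finitely generated FI-module over $k$, and when $k=\mathbb{Q}$ Lemma \ref{E2} with $\beta=1$ gives that it has weight $\leq q$ and stability degree $\leq q$. Finite generation of $H^i(\PMod^\bullet(M);k)$ for any field $k$ follows from the first assertion of Theorem \ref{MAINSPEC} (finite generation passes through the spectral sequence via subquotients and the filtration). For the sharper statement over $\mathbb{Q}$, we are exactly in the setting of Theorem \ref{LERAYSERRE} with $\beta=1$, which yields weight $\leq i$ and stability type at most $(2i,i)$, matching the bounds asserted in the theorem.

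The only real subtlety, and the point where one must be careful, is the functoriality: verifying that the product decomposition of $\pi_1(\C_n(\mathring M))$ is genuinely natural with respect to FI-morphisms, so that the identification of FI$[G]$-modules $H^q(\pi_1(\C_\bullet(\mathring M));k)\cong H^q(\pi_1(M)^\bullet;k)$ is an honest equality of functors. Granted this (which is already done in \cite{JIM}), the remainder of the proof is a direct application of the machinery of Sections \ref{SS}, \ref{FIBSPEC}, and Proposition \ref{PROD}.
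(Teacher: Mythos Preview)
Your proof is correct and follows the same approach as the paper: use the Birman exact sequence (\ref{BIRDIM>3}) as a co-FI-group extension of $\Mod(M)$, identify the fiber cohomology with $H^q(\pi_1(M)^\bullet;k)$ via Birman's product decomposition, apply Proposition~\ref{PROD} to control these FI$[G]$-modules, and then run the machinery of Section~\ref{FIBSPEC} with $\beta=1$. The paper's argument is exactly this, stated more tersely; your more explicit unpacking of which lemma or theorem is invoked at each step (Theorem~\ref{FIG}, Lemma~\ref{E2}, Theorem~\ref{MAINSPEC}, Theorem~\ref{LERAYSERRE}) is accurate.
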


\subsection{The case of manifolds with boundary}\label{BDRYFIsharp}

When the surface $\Sigma_{g,r}$ in Section \ref{PURESUR} or the manifold $M$ in Section \ref{DIM>3} has nonempty boundary, the cohomology of the corresponding pure mapping class groups actually has an FI\#-module structure.

\begin{prop}\label{BDRY}
Let $k$ be a field or $\mathbb{Z}$ and $i\geq 0$. If $M$ is a connected smooth manifold of dimension $d\geq 2$ with nonempty boundary, then the FI-module  $H^i\big(\PMod^\bullet(M);k \big)$  has the structure of an FI\#-module. In particular,  $H^i\big(\PMod^\bullet(M)\big)$ has injectivity degree $0$ (when $k=\mathbb{Q}$).
\end{prop}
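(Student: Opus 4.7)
The plan is to upgrade the Birman co-FI-group extension
\begin{equation*}
1\to\pi_1(\C_n(\mathring{M}))\to\PMod^n(M)\to\Mod(M)\to 1
\end{equation*}
to a functor from \textbf{FI\#}$^{op}$ into group extensions with constant quotient $\Mod(M)$, and then to invoke the Hochschild--Serre spectral sequence argument from the final paragraph of Section~\ref{FIBSPEC}. The conclusion about injectivity degree $0$ for $k=\mathbb{Q}$ is then immediate from the general fact recalled in Section~\ref{PRE} that every \textbf{FI\#}-module has injectivity degree $0$.

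The heart of the proof is the construction of the extra morphisms that \textbf{FI\#} provides beyond \textbf{FI}. First I would fix a collar $c:\partial M\times[0,1)\hookrightarrow M$ together with a countable discrete sequence of ``spare'' base points $\{q_j\}_{j\geq 1}\subset c(\partial M\times(0,1/2))$, and take $\mathfrak{p}_n=(q_1,\ldots,q_n)$ as the reference configuration defining $\PMod^n(M)$. Given an \textbf{FI\#}-morphism $(A,B,\psi):\mathbf{m}\to\mathbf{n}$, define a group homomorphism $(A,B,\psi)^*:\PMod^n(M)\to\PMod^m(M)$ by: representing $[\phi]\in\PMod^n(M)$ by $\phi$ fixing $q_1,\ldots,q_n$; using that $\phi$ is the identity on $\partial M$, isotoping $\phi$ rel $\partial M$ and rel $\{q_1,\ldots,q_n\}$ to a diffeomorphism which is the identity on a small collar neighbourhood containing every $q_j$ with $j\in[m]\setminus A$; and then relabeling via $\psi$ so that the coordinate $j\in A$ corresponds to $q_{\psi(j)}$. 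Non-emptiness of $\partial M$ is the crucial ingredient exactly here: the collar provides the room to park arbitrarily many extra marked points disjoint from the original configuration, and allows $\phi$ to be isotoped to the identity in their neighbourhoods. The resulting class in $\PMod^m(M)$ is independent of all auxiliary choices, because any two such isotopies differ by a diffeomorphism fixing $q_1,\ldots,q_m$ pointwise.

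A routine verification then shows (a) that this construction restricts to the given co-FI-structure on $\PMod^\bullet(M)$ when $A=[m]$, (b) that it is functorial with respect to composition of \textbf{FI\#}-morphisms, and (c) that on the fiber subgroup $\pi_1(\C_n(\mathring{M}))$ it recovers the \textbf{FI\#}-module structure on $H^*(\C_\bullet(\mathring{M}))$ from \cite[Proposition~4.6]{3AMIGOS}. This places us exactly in the setting of the last paragraph of Section~\ref{FIBSPEC}: the Hochschild--Serre spectral sequences assemble into a first-quadrant spectral sequence of \textbf{FI\#}-modules converging to $H^*(\PMod^\bullet(M);k)$. Since \textbf{FI\#}-module structure is preserved under subquotients and extensions, the abutment inherits an \textbf{FI\#}-structure along the filtration by successive quotients $E_\infty^{p,i-p}$, giving the first assertion.

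The main obstacle will be verifying functoriality under composition of \textbf{FI\#}-morphisms in step~1: an \textbf{FI\#}-morphism can simultaneously forget marked points (those with image missing from $B$) and insert new spare ones (those in $[m]\setminus A$), and these have to compose consistently when the collar-parked points must be shuffled along with the original configuration. A conceptually cleaner alternative, sidestepping direct manipulation at the level of mapping classes, is to endow the classifying co-FI-space $B\PDiff^\bullet(M)$ with a \textbf{FI\#}-structure directly from collar geometry --- extending sections of an $M$-bundle by canonical spare sections parked in $c(\partial M\times(0,1/2))$ --- and then transport this structure to $H^*(\PMod^\bullet(M);k)$.
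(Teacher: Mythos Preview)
Your core construction in the second paragraph --- using the collar to park spare marked points and extending diffeomorphisms by the identity there --- is exactly the idea the paper uses, and it is correct. The paper phrases it slightly differently (it fixes a diffeomorphism $\varphi:M\to M\setminus R$ onto the complement of a collar $R$, conjugates by $\varphi$, and then extends by the identity over $R$, with the extra marked points dropped into $R$), but this is the same mechanism as your isotopy description.

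The genuine issue is the wrapper you put around it. Once you have shown that $\PMod^{\bullet}(M)$ is a co-\textbf{FI\#}-group, you are done: applying the contravariant functor $H^i(\_\,;k)$ to a co-\textbf{FI\#}-group yields an \textbf{FI\#}-module immediately, with no spectral sequence needed. That is precisely how the paper argues. Your detour through the Birman extension and the Hochschild--Serre spectral sequence is not only unnecessary, it actually imports hypotheses that are \emph{not} in the statement of the proposition. The Birman sequence used in Sections~\ref{PURESUR} and~\ref{DIM>3} requires $2g+r>2$ in dimension $2$, and in dimension $\geq 3$ requires that $\pi_1(M)$ have trivial center or $\Diff(M)$ be simply connected, together with $\Mod(M)$ of type $FP_\infty$. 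Proposition~\ref{BDRY} assumes none of this --- only that $\partial M\neq\varnothing$ --- so your argument as written does not cover the full statement. Drop the spectral-sequence step entirely: your second paragraph, once functoriality is checked, already proves the proposition.
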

\begin{proof}

We just prove that $\PMod^\bullet(M)$ has the structure of an FI\#-group.
Consider $(A,B,\psi)\in\text{Hom}_{\text{FI\#}}(\mathbf{m},\mathbf{n})$ where $A\subset [m]$, $B\subset [n]$ and $\psi: A\rightarrow B$ is a bijection. In particular the orders $|A|=|B|$. The corresponding morphism
$$(A,B,\psi)_*:\PMod^m(M)\rightarrow\PMod^n(M)$$
is induced from the following composition:

\begin{equation*}
\xymatrix{
\PDiff^m(M)\ar[r]^{|_A}&\PDiff^{|A|}(M)\ar[r]^{\psi}&\PDiff^{|B|}(M)\ar[r]^{\Upsilon\circ c_\varphi}&\PDiff^n(M).}
\end{equation*}

The map $|_A:\PDiff^{\mathfrak{p}}\rightarrow \PDiff^{\mathfrak{p}_A}(M)$  is given by restricting the configuration $\mathfrak{p}\in\C_m(M)$ to the configuration $\mathfrak{p}_A:A\hookrightarrow M$ in $\C_{|A|}(M)$. 

Abusing notation, $\psi:\PDiff^{|A|}(M)\rightarrow \PDiff^{|B|}(M)$ corresponds to the isomporphism  $\PDiff^{\mathfrak{p}_A}\approx\PDiff^{\mathfrak{p}_A\circ\psi^{-1}}$ induced by taking $\mathfrak{p}_A:A\hookrightarrow M$ to  the embedding $\mathfrak{p}_A\circ\psi^{-1}:B\hookrightarrow M$ in $\C_{|B|}(M)$,  using the bijection $\psi:A\rightarrow B$.

Let $R$ be a collar neighborhood of one component of $\partial M$ and fix a diffeomorphism $\varphi: M\rightarrow M\backslash R$. Then, conjugation by $\varphi$ gives us the identification $c_{\varphi}:\PDiff^{|B|}(M)\approx\PDiff^{|B|}(M\setminus R)$.
Finally, we can extend any diffeomorphism $h\in\Diff\big((M\setminus R)\text{ rel }\partial{(M\setminus R)}\big)$ to a diffeomorphism $\Upsilon(h)\in\Diff(M\text{ rel }\partial{M})$ by letting $\Upsilon(h)=h$ in $M\setminus R$ and $\Upsilon(h)$ be the identity in $R$. Therefore we obtain a group homomorphism $$\Upsilon: \PDiff^\mathfrak{q}(M\setminus R)\rightarrow\PDiff^{\Psi_{B,[n]}(\mathfrak{q})}(M),$$

that takes any diffeomorphism $h$ that fixes the configuration $\mathfrak{q}:B\hookrightarrow (M\setminus R)$ in $\C_{|B|}(M\setminus R)$ to a diffeomorphism $\Upsilon(h)$ of $M$ that fixes the configuration $\Psi_{B,[n]}(\mathfrak{q}):[n]\hookrightarrow M$ in $\C_n(M)$ as defined in \cite[Proof of Proposition 4.1]{3AMIGOS}. 
\end{proof}\medskip

%The following result (\cite[Theorem 4.7]{3AMIGOS}) 
%\begin{theo} Let $k$ be a field or $\mathbb{Z}$ and $q\geq 0$. If $M$ is a connected oriented  manifold which is the interior of a compact manifold with non-empty boundary then  $H^q(\C_n(M);k)$ is an FI\#-module finitely generated in degree $\leq 2i$.
%\end{theo}

When a Birman sequence exists (hypothesis of Theorems \ref{MCG} and \ref{PUREDIM>3}), we do have a co-FI\#-group extension of $\Mod(M)$.  
Moreover, Theorem \ref{CONF_BDRY} states the finite generation of the cohomology of configuration spaces of manifolds with boundary.
 Hence, the argument at the end of Section \ref{FIGMod} implies the following results.

\begin{theo}\label{MCGsharp} Let $k$ be any field or $\mathbb{Z}$.
 For any $i\geq 0$, $2g+r>2$  and $r>0$ the FI-module  $H^i\big(\PMod^\bullet_{g,r};k\big)$ has the structure of an FI\#-module which is finitely generated in degree $\leq 2i$.\end{theo}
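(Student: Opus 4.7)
The plan is to apply the FI\#-spectral sequence machinery of Sections \ref{FIBSPEC} and \ref{SSFIsharp} to the Birman exact sequence, exactly as was done in Theorem \ref{MCG} for Theorem \ref{LERAYSERRE}, but carrying the richer FI\#-structure throughout. The recipe is already spelled out at the end of Section \ref{FIGMod} under ``The Hochschild--Serre spectral sequence and FI\#-modules'', so the task reduces to verifying its hypotheses in the present setting.

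First I would assemble the ingredients. Proposition \ref{BDRY} furnishes the FI\#-structure on $\PMod^\bullet_{g,r}$ when $r>0$; inspecting the morphisms $(A,B,\psi)_{*}$ constructed there, they descend to the identity on the fixed quotient $\Mod_{g,r}$ and restrict on the kernel $\pi_1\big(\C_n(\Sigma_{g,r})\big)$ to the FI\#-morphisms of the co-FI\#-space $\C_\bullet(\Sigma_{g,r})$. Hence the Birman sequence assembles into a co-FI\#-group extension of $\Mod_{g,r}$, a group of type $FP_\infty$ whose classifying space has the homotopy type of a CW complex with finitely many cells in each dimension.

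Second, I would feed this into the machinery. Since $r>0$, Theorem \ref{CONF_BDRY} asserts that the FI\#-module $H^q(\C_\bullet(\Sigma_{g,r});k)$ is finitely generated by $O(n^{2q})$ elements, which by \cite[Corollary 2.27]{3AMIGOS} is equivalent to being finitely generated in degree $\leq 2q$. Applying Theorem \ref{FIsharpG} with $X=K(\Mod_{g,r},1)$ and this FI\#-module as coefficients, we conclude that the $E_2$-page of the levelwise Hochschild--Serre spectral sequence,
$$E_2^{p,q}=H^p\!\left(\Mod_{g,r};\,H^q(\C_\bullet(\Sigma_{g,r});k)\right)\;\Longrightarrow\;H^{p+q}(\PMod^\bullet_{g,r};k),$$
is an FI\#-module finitely generated in degree $\leq 2q$. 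This is precisely the setting of Section \ref{SSFIsharp} with constants $\alpha=0$ and $\beta=2$ (so $\alpha\leq\beta$), and Theorem \ref{SPECFIsharp} then delivers the desired conclusion that $H^i(\PMod^\bullet_{g,r};k)$ is an FI\#-module finitely generated in degree $\leq 2i$.

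The only genuine obstacle is the bookkeeping in the first step: checking that the Hochschild--Serre spectral sequence is functorial not only for FI-morphisms but for the full set of FI\#-morphisms, so that each $E_r$-page (and in particular the abutment) inherits an FI\#-structure compatible with the filtration. Since the morphisms $(A,B,\psi)_{*}$ of Proposition \ref{BDRY} are constructed as compositions of honest group homomorphisms of the extensions, and the Lyndon--Hochschild--Serre spectral sequence is natural in morphisms of group extensions, this verification is routine; everything thereafter is a direct invocation of results already established in the excerpt.
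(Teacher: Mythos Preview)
Your proposal is correct and follows essentially the same route as the paper: establish that the Birman sequence is a co-FI\#-group extension of $\Mod_{g,r}$ via Proposition~\ref{BDRY}, invoke Theorem~\ref{CONF_BDRY} to get $H^q(\C_\bullet(\Sigma_{g,r});k)$ finitely generated in degree $\leq 2q$, then run the machinery at the end of Section~\ref{FIGMod} (Theorem~\ref{FIsharpG} followed by Theorem~\ref{SPECFIsharp} with $\alpha=0$, $\beta=2$). You are slightly more explicit than the paper about the compatibility of the FI\#-morphisms with the extension structure, which is a welcome clarification.
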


\begin{theo}\label{PUREDIM>3sharp} Let $k$ be any field or $\mathbb{Z}$.
Let $M$ be a smooth connected manifold of dimension $d\geq 3$ with non-empty boundary that satisfies the hypotheses  of Theorem \ref{PUREDIM>3}. Then, for any $i\geq 0$, the FI-module $H^i\big(\PMod^\bullet(M);k\big)$ has the structure of an FI\#-module that is finitely generated in degree $\leq 2i$.
\end{theo}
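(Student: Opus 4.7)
The strategy is to promote the Birman extension (\ref{BIRDIM>3}) to a co-FI\#-group extension with constant quotient $\Mod(M)$, and then invoke the FI\# variant of the spectral sequence argument described at the end of Section \ref{FIGMod}. By Proposition \ref{BDRY}, the hypothesis $\partial M\neq\emptyset$ already equips $\PMod^\bullet(M)$ with an FI\#-group structure, so the main input is to check that the Birman sequence lifts functorially along FI\# morphisms.

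First I would verify that the FI\#-morphisms $(A,B,\psi)_*$ constructed in the proof of Proposition \ref{BDRY} descend to the identity on $\Mod(M)$ and restrict to natural morphisms on the Birman kernel $\pi_1(\C_\bullet(\mathring{M}))$. Each ingredient of that construction, namely restriction to the sub-configuration on $A$, reindexing via $\psi$, conjugation by a shrinking diffeomorphism $\varphi$, and extension by the identity on the collar $R$ of $\partial M$, preserves the projection to $\Mod(M)$: the extension is supported away from $R$ and $\varphi$ is isotopic to $\text{id}_M$ inside $\Diff(M\text{ rel }\partial M)$, so $c_\varphi$ acts trivially on $\Mod(M)$. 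On the kernel, the same construction is the standard co-FI\#-action on configurations. This yields a functor from $\textbf{FI\#}^{op}$ to short exact sequences of groups with constant quotient $\Mod(M)$.

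Next, since $d\geq 3$, \cite[Theorem 1]{BIRMAN1969} gives $\pi_1(\C_n(\mathring{M}))\cong\pi_1(M)^n$, and the previous step identifies the FI\#-group $\pi_1(\C_\bullet(\mathring{M}))$ with the natural co-FI\#-group $\pi_1(M)^\bullet$. Taking $X=K(\pi_1(M),1)$ as a CW-complex with finitely many cells in each dimension (available because $\pi_1(M)$ is of type $FP_\infty$), Proposition \ref{PROD} applied to $X^\bullet$, together with the iterated Künneth formula (keeping track of Tor contributions when $k=\mathbb{Z}$), shows that the FI\#-module
$$H^q\bigl(\pi_1(\C_\bullet(\mathring{M}));k\bigr)\;\cong\;H^q\bigl(\pi_1(M)^\bullet;k\bigr)\;\cong\;H^q(X^\bullet;k)$$
is finitely generated in degree $\leq 2q$. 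Using that $\Mod(M)$ is of type $FP_\infty$, functoriality of Hochschild--Serre applied to the co-FI\#-extension of the previous step then yields a first-quadrant spectral sequence of FI\#-modules over $k$
$$E_2^{p,q}\;=\;H^p\Bigl(\Mod(M);\,H^q\bigl(\pi_1(\C_\bullet(\mathring{M}));k\bigr)\Bigr)\;\Longrightarrow\;H^{p+q}(\PMod^\bullet(M);k).$$
By Theorem \ref{FIsharpG}, each $E_2^{p,q}$ is an FI\#-module finitely generated in degree $\leq 2q$. This places us exactly in the setting of Section \ref{SSFIsharp} with $\alpha=0$ and $\beta=2$, and Theorem \ref{SPECFIsharp} delivers the conclusion that $H^i(\PMod^\bullet(M);k)$ is an FI\#-module finitely generated in degree $\leq 2i$.

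\textbf{Main obstacle.} The main work is the bookkeeping in the first step, verifying that the FI\#-morphisms of Proposition \ref{BDRY}, which depend on a non-canonical choice of collar $R$ and shrinking diffeomorphism $\varphi$, assemble coherently into a co-FI\#-group extension of $\Mod(M)$. The decisive point is that $\varphi$ is isotopic to the identity in $\Diff(M\text{ rel }\partial M)$, so the conjugation $c_\varphi$ induces the identity on $\Mod(M)$ and, after the identification of the kernel with $\pi_1(M)^\bullet$, matches the standard co-FI\# action on products; this compatibility is what makes the Hochschild--Serre spectral sequence live in the category of FI\#-modules rather than merely FI-modules.
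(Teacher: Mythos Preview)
Your proposal is correct and follows essentially the same route as the paper: promote the Birman sequence to a co-FI\#-group extension of $\Mod(M)$ via the construction of Proposition \ref{BDRY}, and then feed the resulting Hochschild--Serre spectral sequence into the FI\# machinery of Section \ref{SSFIsharp} (Theorem \ref{FIsharpG} followed by Theorem \ref{SPECFIsharp} with $\alpha=0$, $\beta=2$). The only cosmetic difference is the input for the kernel cohomology: the paper invokes Theorem \ref{CONF_BDRY}, whereas you use the identification $\pi_1(\C_\bullet(\mathring{M}))\cong\pi_1(M)^\bullet$ (already exploited in Section \ref{DIM>3}) together with Proposition \ref{PROD}; for $d\geq 3$ your choice is in fact the more directly applicable one, and either route supplies the degree bound $\leq 2q$ needed on $E_2^{p,q}$.
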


From the classification of FI\#-modules given in \cite[Theorem 2.24]{3AMIGOS} and the cases when $k$ is either $\mathbb{Z}$ or the fields $\mathbb{Q}$ or $\mathbb{Z}/p\mathbb{Z}$ in Theorems \ref{MCGsharp} and \ref{PUREDIM>3sharp}, we obtain Theorems \ref{BETTI} and \ref{BETTI2}, respectively.

\subsection{Cohomology of classifying spaces for some diffeomorphism groups}
Let $M$ be a connected and compact smooth manifold of dimension $d\geq 3$. We have a fiber bundle
\begin{equation}\label{FIB}
\BPDiff^n(M)\rightarrow \BDiff(M\text{ rel }\partial M)
\end{equation}
\noindent where the ``fiber'' is given by $\Diff(M\text{ rel  }\partial M)/\PDiff^n(M) \approx \C_n(\mathring{M})$, the configuration space of $n$ ordered points in $\mathring{M}$, the interior of $M$. This gives us a functor from {\bf FI$^{op}$} to the category {\bf Fib}$\big(\BDiff(M\text{ rel }\partial M)\big)$.
The hypotheses in the Theorem below give the setting needed to apply the arguments in Section \ref{FIBSPEC} with $\beta=1$ to get the desired conclusion.

\begin{theo}\label{DIFFDIM>3} Let $M$ be a connected real manifold of dimension $d\geq 3$. Suppose that the classifying space $B\Diff(M\text{ rel }\partial M)$ has the homotopy type of a CW-complex with finitely many cells in each dimension. Then, for any field $k$ and $i\geq 0$, the  FI-module %${\text{\bf n}}\mapsto
$ H^i(B\PDiff^\bullet M;k)$ is finitely generated over $k$ and has weight $\leq i$ and stability type at most  $(2i,i)$ when $k=\mathbb{Q}$.
\end{theo}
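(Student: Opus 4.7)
The plan is to apply the machinery developed in Section \ref{FIBSPEC} to the fibration (\ref{FIB}). First I would observe that the assignment $\mathbf{n}\mapsto\big(\BPDiff^n(M)\rightarrow \BDiff(M\text{ rel }\partial M)\big)$ defines a co-FI-fibration over the fixed base $X:=\BDiff(M\text{ rel }\partial M)$: the restriction maps $\PDiff^n(M)\to\PDiff^m(M)$ induced by any injection $f:[m]\hookrightarrow[n]$ are compatible with the bundle projection, and the fiber over the basepoint is identified with the co-FI-space $\C_\bullet(\mathring{M})$. The CW hypothesis on $X$ ensures that $X$ is a connected CW complex with finitely many cells in each dimension, and its fundamental group is $G=\pi_0\big(\Diff(M\text{ rel }\partial M)\big)=\Mod(M)$.

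Next, I would identify the $E_2$-page of the resulting Leray--Serre spectral sequence of FI-modules converging to $H^*(\BPDiff^\bullet M;k)$ as
\begin{equation*}
E_2^{p,q}=H^p\big(X;H^q(\C_\bullet(\mathring{M});k)\big),
\end{equation*}
where $H^q(\C_\bullet(\mathring{M});k)$ carries the structure of an $\FIG$-module over $k$ via the monodromy action of $\Mod(M)$ (this action commutes with the FI-morphisms because $\Diff(M\text{ rel }\partial M)$ acts on $\C_n(\mathring{M})$ by permutation-equivariant diffeomorphisms). Since $d\geq 3$, the FI-module $H^q(\C_\bullet(\mathring{M});k)$ is finitely generated over any field $k$ by \cite[Proposition 4.1]{4AMIGOS}, and when $k=\mathbb{Q}$ it has weight $\leq q$ and stability degree $\leq q$ by Theorem \ref{DIM3} (noting that $q+2-d\leq q$). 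This places us exactly in the situation analyzed in Section \ref{FIBSPEC} with $\beta=1$.

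Now I would invoke Theorem \ref{FIG}: because $X$ is a connected CW complex with finitely many cells in each dimension and $H^q(\C_\bullet(\mathring{M});k)$ is a finitely generated $\FIG$-module over $k$, the FI-module $E_2^{p,q}$ is finitely generated over $k$, and, when $k=\mathbb{Q}$, has weight $\leq q$ and stability degree $\leq q$. Finally, applying Theorem \ref{MAINSPEC} (equivalently Theorem \ref{LERAYSERRE}) with $\alpha=0$ and $\beta=1$ yields finite generation of $H^i(\BPDiff^\bullet M;k)$ for every $i\geq 0$ and every field $k$, and, in the rational case, weight $\leq i$ and stability type at most $(2i,i)$.

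The main technical point to verify carefully, and the step where the dimension hypothesis $d\geq 3$ is essential, is the input on the fiber: finite generation of the $\FIG$-module $H^q(\C_\bullet(\mathring{M});k)$ with the claimed bounds. Once this is secured, the rest of the argument is a direct instantiation of the spectral sequence formalism developed in Sections \ref{SS} and \ref{FIBSPEC}, together with the functoriality of the Leray--Serre spectral sequence that promotes everything to the category of FI-modules.
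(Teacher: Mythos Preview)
Your proof is correct and follows essentially the same approach as the paper: the paper's argument consists of a single sentence pointing to the fibration (\ref{FIB}) and invoking Section \ref{FIBSPEC} with $\beta=1$, and you have simply spelled out those steps in detail (identifying the co-FI-fibration, the $E_2$-page as $H^p(X;H^q(\C_\bullet(\mathring{M});k))$, the input from Theorem \ref{DIM3} for the fiber, and then Theorems \ref{FIG} and \ref{LERAYSERRE}). Nothing substantive differs.
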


%The inclusion of the fiber $C_n(M)$ into $B\PDiff^n M$ in the fiber bundle \ref{FIB} is a map of coFI-spaces that induces,  for each $i\geq 0$, a homomorphism of FI-modules
%$$f:H^i(B\PDiff^n M;\mathbb{Q})\rightarrow H^i(C_n(M);\mathbb{Q}).$$

%Moreover, we know that $f$ is the composition 

%$$H^i(B\PDiff^n M;\mathbb{Q})\rightarrow E^{0,i}_\infty \hookrightarrow E^{0,i}_2 =H^i(C_n(M);\mathbb{Q})$$

%where the first map is a surjection. Hence 

%$$\ker f= \ker\big[H^i(B\PDiff^n M;\mathbb{Q})\rightarrow E^{0,i}_\infty\big]$$

%From Proposition \ref{KER} it follows that $\ker f$ has stability degree at most $2i$.
%and 
%$$co\ker f=co\ker \big[E^{0,i}_\infty \hookrightarrow E^{0,i}_2=H^i(C_n(M);\mathbb{Q})\big]$$

%From Proposition \ref{KER} it follows that $co\ker f$ has stability degree at most $2i$.

%{\bf QUESTION: How does  this relates with dimension of the representation??}

\section{Application to cohomology of some wreath products}\label{WREATH}

Let $G$ be a group of type $FP_\infty$. 
The wreath product $G\wr S_n$ is the semidirect product $ G^n\rtimes S_n$, where $S_n$ acts on $G^n$ by permuting the coordinates. Therefore there is a split short exact sequence
$$1\rightarrow G^n \rightarrow G \wr S_n\rightarrow  S_n\rightarrow 1$$ 

For any $i\geq 0$ and any partition $\lambda$, a transfer argument implies that the dimension of $H^i\big(G\wr S_n;V(\lambda)_n\big)$ is equal to the multiplicity of $V(\lambda)_n$ in $H^i\big(G^n;\mathbb{Q}\big)$. But, from Proposition \ref{PROD} and \cite[Proposition 2.58]{3AMIGOS}, this multiplicity is constant for $n\geq 2i$ . Hence we obtain cohomological stability for the group $G\wr S_n$ with coefficients in any $S_n$-representation for any $n\geq 2i$.

More generally, let $PK$ be a co-FI-group given by $\mathbf{n}\mapsto PK_n$. Assume that there is a sequence of groups $K_n$ such that, for each $n$, we have the following short exact sequence:

$$1\rightarrow PK_n \rightarrow K_n\rightarrow  S_n\rightarrow 1$$

The wreath product $G\wr K_n$ is the semidirect product $ G^n\rtimes K_n$, where $K_n$ acts on $G^n$ via the surjection $K_n\rightarrow S_n$. Therefore there is a split short exact sequence
$$1\rightarrow G^n\times PK_n \rightarrow G \wr K_n\rightarrow  S_n\rightarrow 1$$

On the other hand, for any $i\geq 0$, the naturality of the K\"unneth formula implies the following isomorphism of FI-modules:

$$H^i(G^{\bullet}\times PK)=\bigoplus _{p+q=i}H^p(G^{\bullet})\otimes H^q(PK).$$

Suppose that the graded FI-module $H^*(PK)$ is known to be of finite type. In \cite[Proposition 2.61]{3AMIGOS} is proved that finite generation is closed under tensor products, therefore the FI-modules  $H^p(G^{\bullet})\otimes H^q(PK)$ are finitely generated  for $p,q\geq 0$ such that $p+q=i$. Moreover, $$\text{weight}\big(H^p(G^{\bullet})\otimes H^q(PK)\big)\leq \text{weight}\big(H^p(G^{\bullet})\big)+\text{weight}\big(H^p(PK)\big).$$ It follows that the consistent sequence $H^i(G^{n}\times PK_n;\mathbb{Q})$ is monotone and uniformly representation stable (although we do not always get a specific stable range).

As before, the dimension of  $H^i\big(G\wr K_n;V(\lambda)_n\big)$  is given by the multiplicity of $V(\lambda)_n$ in $H^i\big(G^n\times PK_n;\mathbb{Q}\big)$, which is eventually constant by uniform representation stability. Therefore we have that  $H^i\big(G\wr K_n;V(\lambda)_n\big)\approx H^i\big(G\wr K_{n+1};V(\lambda)_n\big)$ for any $n$ sufficiently large. In particular, we obtain rational homological stability for the groups $G\wr K_n$.

%Furthermore, if we happen to know that $H^*(PK)$  is actually a graded FI\#-module, since we know from Propsition \ref{PROD} that $H^*(G^{\bullet})$ is a graded FI\#-module, then it follows that the tensor products  $H^p(G^{\bullet})\otimes H^q(PK)$ are also FI\#-modules and their stability degree is equal to their weight {\bf is it  true?? contrast with degree  of generation} (see \cite[Section 2.7]{3AMIGOS}). In this case we can get specific stable ranges for the corresponding uniformly representation stable sequences.\bigskip

Parts (ii), (iii) and (iv) of Theorem \ref{COHOWREATH} follow from applying the above discussion to the short exact sequences:
$$1\rightarrow P_n(\Sigma_{g,r})\rightarrow B_n(\Sigma_{g,r})\rightarrow  S_n\rightarrow 1$$
$$1\rightarrow \PMod_{g,r}^n \rightarrow \Mod_{g,r}^n\rightarrow  S_n\rightarrow 1$$
$$1\rightarrow \PMod^n(M) \rightarrow \Mod^n(M)\rightarrow  S_n\rightarrow 1$$

To obtain Theorem \ref{COHOWREATH}  part (v), we consider the co-FI-groups $\PSigma_\bullet$  and $\Sigma_\bullet^+$, which are functors from {\bf FI$^{op}$} to {\bf Gp} given by $\mathbf{n}\mapsto \PSigma_n$, the {\it pure string motion group} (see definition in \cite[Sections 1 \& 2]{WILSON})  and $\mathbf{n}\mapsto \Sigma_n^+$, the braid permutation group, respectively.   
In \cite[Theorem 6.4]{WILSON} Wilson proved that for any $k\geq 0$ the sequence $\big\{H^k(\PSigma_n;\mathbb{Q})\big\}$ satisfies uniform representation stability with stable range $n\geq 4k$.  Therefore, \cite[Theorem 1.14]{3AMIGOS} implies that for any $k\geq 0$ the FI-module $H^k(\PSigma_\bullet)$  is finitely generated.% (actually in degree $\leq 4k$).

The co-FI-groups  $\Sigma_\bullet^+$ and $\PSigma_\bullet$ are related by the following short exact sequence:

$$1\rightarrow P\Sigma_n \rightarrow \Sigma_n^+\rightarrow  S_n\rightarrow 1,$$

which give us again the setting discussed above.

\bibliographystyle{amsalpha}
\bibliography{referFI}

\begin{small}
\noindent Department of Mathematics\\
University of Chicago\\
5734 University Ave.\\
Chicago, IL 60637\\
E-mail: \texttt{\href{mailto:atir83@math.uchicago.edu}{atir83@math.uchicago.edu}}
\end{small}
 \end{document}